\newlength{\defbaselineskip}
\newcommand{\setlinespacing}[1]%
           {\setlength{\baselineskip}{#1 \defbaselineskip}}
\def\o*{o_{{\!}_{P^*}}}
\def\O*{\cal O_{{\!}_{P^*}}}
\def\Var{\mathrm{var}}
\def\.{\mbox{.}}
\def\le{\leqslant}
\def\ge{\geqslant}
\def\sfrac(#1,#2){\mbox{$\frac{#1}{#2}$}}
\def\comb(#1,#2){\mbox{${#1\choose #2}$}}
\def\tr{{\mbox{\tiny{$\mathrm{T}$}}}}
\def\bernstein{Bernstein }
\def\Err{\mathrm{Err}}
\def\lfhook#1{\setbox0=\hbox{#1}{\ooalign{\hidewidth
    \lower1.5ex\hbox{'}\hidewidth\crcr\unhbox0}}}
\newtheorem{theorem}{Theorem}
\newtheorem{corollary}[theorem]{Corollary}
\newtheorem{lemma}[theorem]{Lemma}
\newtheorem{rem}{Remark}[section]
\newtheorem{example}{\bf Example}
\title{Iterated \bernstein polynomial approximations}
\author{Zhong Guan\\
Department of Mathematical Sciences, \\
Indiana University South Bend,
\\
1700 Mishawaka Avenue, P.O. Box 7111 \\
South Bend, IN 46634-7111,
U.S.A.\\
zguan@iusb.edu}
\date{}
\begin{document}


\maketitle
\begin{abstract}
Iterated \bernstein polynomial approximations of degree $n$ for continuous function which also use the values of the function at $i/n$, $i=0,1,\ldots,n$, are proposed.
The rate of convergence of the classic \bernstein polynomial approximations is significantly improved
by the iterated \bernstein polynomial approximations without increasing the degree of the polynomials.
The close form expression of the limiting iterated \bernstein polynomial approximation  of degree $n$
when the number of the iterations approaches infinity is obtained.
The same idea applies to the $q$-\bernstein polynomials and the Szasz-Mirakyan approximation.
The application to numerical integral approximations which gives surprisingly good results is also discussed.\\
\\
\vspace{1em}
\noindent{\it MSC:} 41A10; 41A17; 41A25.\\
\noindent{\it Keywords:}  \bernstein polynomials; B\'{e}zier curves;
Convexity preservation; Iterated \bernstein polynomials; Numerical
integration; $q$-\bernstein polynomials; Rate of approximation; the
Szasz-Mirakyan operators.
\end{abstract}

\section{Introduction}
The \bernstein polynomials \citep{Bernstein} have been used for
approximations of functions in many areas of mathematics and other
fields such as smoothing in statistics and constructing B\'{e}zier
curves \cite[see][for examples]{Gal-S-G-book-2008,
Pena-J-M-book-1999} which have important applications in computer
graphics. One of the advantages of the \bernstein polynomial
approximation of a continuous function $f$  is that it approximates
$f$  on $[0, 1]$ uniformly using only the values of $f$ at $i/n$,
$i=0, 1, \ldots, n$. In case when the evaluation of $f$ is difficult
and expensive, the \bernstein polynomial approximation is preferred.

The properties of the \bernstein polynomial approximation have been
studied extensively by many authors for decades.
 However the slow optimal rate $\mathcal{O}(1/n)$ of convergence of the classical \bernstein
 polynomial approximation  makes it not so attractive. Many authors have made tremendous
 efforts to improve the performance of the classical \bernstein polynomial approximation.
 Among many others,  Butzer\cite{Butzer-1953} introduces linear combinations of the
 \bernstein polynomials  and  Phillips\cite{Phillips-G-M-1997-q-Bernstein} proposes
 the $q$-\bernstein polynomials which is a generalization of the classical \bernstein
 polynomial approximation. However, Butzer\cite{Butzer-1953}'s approximation involves
 not only the \bernstein polynomials of degree $n$ but also degree of $2n$ which
 requires more sampled values of the function to be approximated at the $2n+1$
 rather than $n+1$ uniform partition points of $[0, 1]$. The $q$-\bernstein polynomial
 approximates a function $f$ only when $q\ge1$. For $q>1$, it seems that $f(z)$ has to
 be an analytic complex function on disk $\{z\,:\, |z|<r\}$, $r>q$, so that the $q$-\bernstein
 polynomial approximation  of degree $n$ has a better rate of convergence, $\mathcal{O}(q^{-n})$,
 than the best rate of convergence, $\mathcal{O}(n^{-1})$, of  the classical \bernstein polynomial approximation of degree $n$ \citep[see][for example]{Ostrovska-2003-JAT,Wang-Wu-2008-JMAA-Saturate-Conv-q-Bernstein}. If $q>1$, the $q$-\bernstein polynomial approximation of degree $n$ uses the sampled values of the function at $n+1$ nonuniform
 partition points of $[0,1]$. These points except $t=1$ are attracted toward $t=0$ when $q$ is getting larger so that the approximation becomes worse in the the neighborhood of the right end-point.  This is a serious drawback of the $q$-\bernstein polynomial approximation which limits the scope of its applications.

 In this paper, we propose a simple procedure to generalize and improve the classical \bernstein polynomial approximation by repeatedly approximating the errors using the \bernstein polynomial approximations. This method involves only the iterates of the \bernstein operator applied on the base \bernstein polynomials of degree $n$ and the sampled values of the function being approximated at the same set of $n+1$ uniform partition points of $[0, 1]$. The improvement made by the $q$-\bernstein polynomial approximation with properly chosen $q$ can be achieved by the iterated \bernstein polynomials without messing up the right boundary.

 \section{Preliminary Results About the Classical \bernstein Polynomial}

 Let $f$ be a function on $[0, 1]$. The classical \bernstein polynomial of degree $n$ is defined as
\begin{equation}\label{n-th order bernstein approximation}
    \mathbb{B}_{n}f(t)=\mathbb{B}_{n}^{(1)}f(t)=\sum_{i=0}^{n}
f\big(\sfrac({i},{n})\big)B_{ni}(t),\quad 0\le t\le 1,
\end{equation}
where $\mathbb{B}_{n}$ is called the \bernstein operator and $B_{ni}(t)={n\choose i}t^i(1-t)^{n-i}$, $i=0,\ldots,n$,
are called the \bernstein basis polynomials.  Note that the \bernstein polynomial of degree $n$, $\mathbb{B}_{n}^{(1)}f$, uses only the sampled values of $f$ at $t_{ni}=i/n$, $i=0, 1, \ldots, n$.
Note also that for $i=0,\ldots,n$, $$\beta_{ni}(t)\equiv (n+1)B_{ni}(t),\quad 0\le t\le 1,$$ is the density function of beta distribution  $beta(i+1, n+1-i)$. Let $Y_n(t)$ be a binomial $b(n, t)$ random variable. Then $E\{Y_n(t)\}=nt$, $\Var\{Y_n(t)\}=E\{Y_n(t)-nt\}^2=nt(1-t)$,  $E\{Y_n(t)-nt\}^3=nt(1-t)(1-2t)$, and   $\mathbb{B}_{n}f(t) =E[f\{Y_n(t)/n\}]$.
The error of $\mathbb{B}_{n}^{(1)}f$ is
\begin{equation}\label{Error of the n-th order bernstein approximation}
    \Err\{\mathbb{B}_{n}^{(1)}f\}(t)=\mathbb{B}_{n}^{(1)}f(t)-f(t).
\end{equation}
Let $f$ be a member of $C^{(r)}[0,1]$, the set of all continuous functions that have continuous first $r$ derivatives. $C[0, 1]=C^{(0)}[0,1]$.  Let the modulus of continuity of the $r$th derivative $f^{(r)}$ be
$$\omega_r(\delta)=\max_{|s-t|<\delta}|f^{(r)}(s)-f^{(r)}(t)|,\quad \delta>0.$$
About the rate of convergence of $\mathbb{B}_{n}^{(1)}f$ we have the following well known results
\citep[see][]{Lorentz-1986-book-bernstein-poly}.
\begin{theorem} \label{rate of conergence of berinstein polynomial approx} Suppose $f\in C^{(r)}[0,1]$, $r=0,1$.
For each $n>1$
$$|\Err\{\mathbb{B}_{n}^{(1)}f\}(t)|=|\mathbb{B}_{n}f(t)-f(t)|\le C_r n^{-r/2}\omega_r(n^{-1/2}),$$
where $C_r$ is a constant depending on $r$ only. One can choose $C_0=5/4$ and $C_1=3/4$.
\end{theorem}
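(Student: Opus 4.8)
The plan is to exploit the probabilistic representation $\mathbb{B}_{n}f(t)=E[f\{Y_n(t)/n\}]$ recorded above, writing the error as $\mathbb{B}_{n}f(t)-f(t)=E[f\{Y_n(t)/n\}-f(t)]$ and estimating the integrand through the modulus of continuity. Throughout I would use the elementary subadditivity bound $\omega_r(\lambda\delta)\le(1+\lambda)\omega_r(\delta)$ for $\lambda\ge0$ and $\delta>0$, together with the two moment identities $E\{Y_n(t)/n-t\}=0$ and $E\{Y_n(t)/n-t\}^2=t(1-t)/n$ that follow from the binomial mean $E\{Y_n(t)\}=nt$ and variance $\Var\{Y_n(t)\}=nt(1-t)$ stated before the theorem.

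For $r=0$ I would first establish the pointwise inequality $|f(s)-f(t)|\le(1+\delta^{-2}(s-t)^2)\,\omega_0(\delta)$. This is immediate when $|s-t|\le\delta$, and for $|s-t|>\delta$ it follows by writing $\lambda=|s-t|/\delta>1$ and chaining $\omega_0(\lambda\delta)\le(1+\lambda)\omega_0(\delta)\le(1+\lambda^2)\omega_0(\delta)$. Substituting $s=Y_n(t)/n$ and taking expectations then gives $|\mathbb{B}_{n}f(t)-f(t)|\le\omega_0(\delta)\{1+\delta^{-2}t(1-t)/n\}$. Choosing $\delta=n^{-1/2}$ and bounding $t(1-t)\le1/4$ yields the factor $1+1/4=5/4$, which is the asserted constant $C_0$.

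For $r=1$ the decisive observation is that the first-order Taylor term is annihilated by the expectation. Writing $f(s)-f(t)-f'(t)(s-t)=\int_t^s\{f'(u)-f'(t)\}\,du$ and using $|f'(u)-f'(t)|\le\omega_1(|s-t|)$ for $u$ between $s$ and $t$, I get $|f(s)-f(t)-f'(t)(s-t)|\le|s-t|\,\omega_1(|s-t|)$. Since $E\{Y_n(t)/n-t\}=0$, the linear term drops out and $|\mathbb{B}_{n}f(t)-f(t)|\le E[|Y_n(t)/n-t|\,\omega_1(|Y_n(t)/n-t|)]$. Applying $\omega_1(|Y_n(t)/n-t|)\le(1+\delta^{-1}|Y_n(t)/n-t|)\omega_1(\delta)$ splits this into $\omega_1(\delta)\{E|Y_n(t)/n-t|+\delta^{-1}E(Y_n(t)/n-t)^2\}$. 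With $\delta=n^{-1/2}$, the Cauchy--Schwarz (Jensen) bound $E|Y_n(t)/n-t|\le\{t(1-t)/n\}^{1/2}\le1/(2\sqrt{n})$ for the first term and $t(1-t)\le1/4$ for the second give $n^{-1/2}\omega_1(n^{-1/2})(1/2+1/4)$, i.e.\ the constant $C_1=3/4$.

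The routine parts are the moment computations and the subadditivity of $\omega_r$; the only genuinely delicate points are the two places where sharpness of the constants is won. For $r=0$ the naive estimate $\omega_0(\lambda\delta)\le(1+\lambda)\omega_0(\delta)$ applied directly under the expectation would only produce $3/2$ via $E|Y_n(t)/n-t|\le1/(2\sqrt{n})$; passing instead through the quadratic bound $1+\lambda^2$ and the exact second moment is what tightens this to $5/4$. For $r=1$ the essential gain---the extra half-power of $n$---comes entirely from the vanishing of $E\{Y_n(t)/n-t\}$, so I would isolate that cancellation before any estimation.
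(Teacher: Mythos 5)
Your proposal is correct, and both constants come out exactly as claimed: the quadratic bound $\omega_0(\lambda\delta)\le(1+\lambda^2)\omega_0(\delta)$ combined with the exact second moment $t(1-t)/n$ gives $C_0=1+\tfrac14=\tfrac54$, and isolating the cancellation $E\{Y_n(t)/n-t\}=0$ before estimating gives $C_1=\tfrac12+\tfrac14=\tfrac34$. Note that the paper itself offers no proof of this theorem---it is quoted as a known result with a citation to Lorentz's book (the $r=0$ case being Popoviciu's)---and your probabilistic argument is precisely the classical one underlying that reference, just written with expectations of $f\{Y_n(t)/n\}$ in place of sums against the basis polynomials $B_{ni}(t)$; the two formulations are identical term by term.
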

The result according to $r=0$ is due to
Popoviciu\cite{Popoviciu-1935}. The order of approximation of $f\in
C^{(r)}[0,1]$ by arbitrary polynomials is given by the theorem of
Dunham Jackson \citep{Dunham-Jackson-1930}
\begin{theorem} [Dunham Jackson] \label{thm of Dunham Jackson} Suppose $f\in C^{(r)}[0,1]$, $r\ge0$.
For each $n>r$ there exists a polynomial $P_n$ of degree at most $n$ so that
$$|P_n(t)-f(t)|\le C_r' n^{-r}\omega_r(n^{-1}),$$
where $C_r'$ is a constant depending on $r$ only. If $r=0$, one can choose $C_0'=3$.
\end{theorem}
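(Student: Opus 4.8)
The plan is to prove the trigonometric version first and then transfer it to the algebraic setting on $[0,1]$. The affine map $t = (1+x)/2$ carries $[-1,1]$ onto $[0,1]$, preserves polynomial degree, and alters $\omega_r$ only by the constant factor $2^{-r}$, so it suffices to work on $[-1,1]$; composing with $x = \cos\theta$ sends $f$ to the even, $2\pi$-periodic function $g(\theta)=f(\cos\theta)$. The point of this substitution is that an even trigonometric polynomial of degree $n$ in $\theta$ is, through $\cos(k\theta)=T_k(\cos\theta)$, an algebraic polynomial of degree $n$ in $x=\cos\theta$, so the degree bookkeeping transfers cleanly.

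The core of the argument is a Jackson-type singular integral. I would construct the Jackson kernel
$$J_m(u) = \lambda_m\left(\frac{\sin(mu/2)}{\sin(u/2)}\right)^4,$$
a nonnegative even trigonometric polynomial of degree $2(m-1)$, with $\lambda_m$ chosen so that $\frac{1}{2\pi}\int_{-\pi}^{\pi} J_m(u)\,du = 1$. The fourth power (rather than the Fejér kernel's square) is essential, since it yields the moment bounds $\frac{1}{2\pi}\int_{-\pi}^{\pi} |u|^j J_m(u)\,du \le c_j\, m^{-j}$ for $j=1,2$ without a stray logarithmic factor; these follow from elementary estimates of $\sin$ on $[-\pi,\pi]$. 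Setting $T_n g(\theta) = \frac{1}{2\pi}\int_{-\pi}^{\pi} g(\theta+u) J_m(u)\,du$ with $2(m-1)\le n$ produces an even trigonometric polynomial of degree $\le n$, and the normalization gives the identity $T_n g(\theta)-g(\theta) = \frac{1}{2\pi}\int_{-\pi}^{\pi}[g(\theta+u)-g(\theta)]\,J_m(u)\,du$.

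For $r=0$ I would bound $|g(\theta+u)-g(\theta)|\le \omega_0(|u|)$ and apply the subadditivity $\omega_0(\lambda\delta)\le(\lambda+1)\omega_0(\delta)$ with $\delta=1/m$; the first moment bound then yields $\norm{T_n g - g}\le (1+c_1)\,\omega_0(1/m)\le C_0'\,\omega_0(1/n)$, the last step again by subadditivity since $m\asymp n$. Pushing the normalization and the $m\asymp n$ comparison as tight as possible is what produces the explicit constant $C_0'=3$. For $r\ge1$ I would replace $J_m$ by a signed linear combination of Jackson kernels engineered to have vanishing moments $\int u^j K = 0$ for $1\le j\le r$, while keeping $\frac{1}{2\pi}\int|u|^r|K|\,du \le c\,m^{-r}$ and a bounded Lebesgue constant. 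A Taylor expansion of $g$ to order $r$ then cancels the low-order terms, leaving a remainder controlled by $\omega_r(1/m)\,m^{-r}$, which is precisely the gain of the factor $n^{-r}$.

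The step I expect to be delicate is transferring the derivative information through $x=\cos\theta$: since $g'(\theta)=-\sin\theta\,f'(\cos\theta)$, the modulus of continuity of $g^{(r)}$ mixes $f',\dots,f^{(r)}$ and degenerates where $\sin\theta$ vanishes, i.e.\ near the endpoints $x=\pm1$. Bounding $\omega_r$ of $g^{(r)}$ in terms of $\omega_r$ of $f^{(r)}$ uniformly up to the endpoints is the main obstacle, and it is where the dependence of $C_r'$ on $r$ and the exact degree count must be handled with care; once that is in place, everything else reduces to the two kernel moment estimates above.
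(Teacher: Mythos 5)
The paper itself offers no proof of this statement---it is quoted as a classical theorem and attributed to Jackson's 1930 monograph---so there is no internal argument to compare yours against; what follows judges your plan on its own merits. Your $r=0$ case is the standard Jackson-kernel proof and is sound in outline: the reduction $[0,1]\to[-1,1]\to$ even periodic functions is handled correctly, and $\omega_0(g,\delta)\le\omega_0(f,\delta)$ for $g=f\circ\cos$ does hold because $|\cos\alpha-\cos\beta|\le|\alpha-\beta|$. The trouble starts at $r\ge1$, where there are two genuine gaps. The first is the kernel-moment accounting for $r\ge2$: your Taylor-remainder step needs not only $\frac{1}{2\pi}\int_{-\pi}^{\pi}|u|^{r}|K(u)|\,du\le c\,m^{-r}$ but also $\frac{1}{2\pi}\int_{-\pi}^{\pi}|u|^{r+1}|K(u)|\,du\le c\,m^{-r-1}$, because the subadditivity bound $\omega_r(|u|)\le(1+m|u|)\,\omega_r(1/m)$ injects one extra power of $|u|$. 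For the fourth-power kernel the absolute-moment bounds $\asymp m^{-j}$ hold only for $j\le2$: since $\lambda_m\asymp m^{-3}$, a direct computation gives $\int_{-\pi}^{\pi}|u|^{3}J_m(u)\,du\asymp m^{-3}\log m$ and $\int_{-\pi}^{\pi}u^{4}J_m(u)\,du\asymp m^{-3}$. Any finite signed combination $K=\sum_i c_iJ_{m_i}$ with $m_i\asymp m$ inherits these deficiencies through $|K|\le\sum_i|c_i|J_{m_i}$, so the moment property you assert for $K$ is unobtainable in your framework once $r\ge2$; the classical remedy is to let the power of the kernel grow with $r$, i.e.\ to use $\lambda_{m,s}\bigl(\sin(mu/2)/\sin(u/2)\bigr)^{2s}$ with $2s\ge r+3$.

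The second gap is the transfer step that you flag as ``delicate'': it is not delicate, it is false as stated. There is no inequality $\omega(g^{(r)},\delta)\le C_r\,\omega(f^{(r)},\delta)$ for $g=f\circ\cos$, even when $r=1$: take $f(x)=x$, so that $\omega(f',\delta)\equiv0$, while $g'(\theta)=-\sin\theta$ gives $\omega(g',\delta)\asymp\delta$. The chain rule unavoidably produces terms involving $\|f^{(j)}\|_\infty$, $j\le r$, which cannot be controlled by $\omega_r$ alone. Two standard repairs exist. One: before transferring, subtract from $f$ its Taylor polynomial of degree $r$ at a point (legitimate, since this changes neither $\omega_r$ nor the problem---a polynomial of degree $r<n$ can be absorbed into $P_n$); the remainder $h$ then satisfies $\|h^{(j)}\|_\infty\le C_r\,\omega_r(2)\le C_r'\,n\,\omega_r(1/n)$, so the spurious chain-rule contributions at scale $\delta\asymp 1/n$ are of size $\delta\cdot n\,\omega_r(1/n)\asymp\omega_r(1/n)$, which is affordable. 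Two (the route most texts take): deduce the case $r\ge1$ from your $r=0$ theorem by induction via $E_n(f)\le\frac{c}{n}E_{n-1}(f')$, proved by integrating the best approximation to $f'$ and applying the $r=0$ result to the primitive; this bypasses the chain rule entirely. Without one of these devices your plan does not close for any $r\ge1$.
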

The following is a result of Voronovskaya \cite{Voronovskaya-1932} about the asymptotic formula of the \bernstein polynomial approximation.
\begin{theorem} [E. Voronovskaya] \label{thm of Voronovskaya}  Suppose that $f$ has second derivative $f''$. Then
\begin{equation}\label{asymptotic formula of Bernstein poly}
\Err\{\mathbb{B}_{n}^{(1)}f\}(t) = \mathbb{B}_{n}f(t)-f(t)=\frac{t(1-t)}{2n}f''(t)+\frac{1}{n}\varepsilon_n(t),
\end{equation}
where $\varepsilon_n(t)$ is a sequence of functions which converge to 0 as $n\to\infty$.
\end{theorem}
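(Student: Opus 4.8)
The plan is to combine the probabilistic representation $\mathbb{B}_{n}f(t)=E[f\{Y_n(t)/n\}]$ recorded above with a second-order Taylor expansion of $f$ about the point $t$. Since $f$ possesses a second derivative, Taylor's theorem in Peano form gives, for every $s\in[0,1]$,
\[
f(s)=f(t)+f'(t)(s-t)+\tfrac12 f''(t)(s-t)^2+(s-t)^2\psi(s-t),
\]
where $\psi(h)\to0$ as $h\to0$. Because $f\in C^{(1)}[0,1]$, the function $\psi$ is continuous away from the origin and tends to $0$ there, so it is bounded on the compact range of arguments that actually occurs (both $s$ and $t$ lie in $[0,1]$); fix a bound $M$ for $|\psi|$.

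First I would set $s=Y_n(t)/n$, take expectations, and use linearity to split $\mathbb{B}_{n}f(t)$ into four pieces. The constant term returns $f(t)$; the linear term vanishes since $E\{Y_n(t)/n-t\}=0$; and the quadratic term yields $\tfrac12 f''(t)\,E\{(Y_n(t)/n-t)^2\}=\tfrac12 f''(t)\,t(1-t)/n$ by the variance formula $\Var\{Y_n(t)\}=nt(1-t)$. This already isolates the asserted leading term $t(1-t)f''(t)/(2n)$ and leaves the remainder
\[
R_n(t)=E\big[(Y_n(t)/n-t)^2\,\psi(Y_n(t)/n-t)\big],
\]
so that establishing \eqref{asymptotic formula of Bernstein poly} reduces to showing $nR_n(t)\to0$ for each fixed $t$.

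The main obstacle is controlling this remainder, and I would handle it by a standard near/far split. Given $\eps>0$, choose $\delta>0$ so that $|\psi(h)|<\eps$ whenever $|h|<\delta$, and break $nR_n(t)$ according to whether $|Y_n(t)/n-t|<\delta$. On the near event the integrand is dominated by $\eps(Y_n(t)/n-t)^2$, so that contribution is at most $\eps\,n\,E\{(Y_n(t)/n-t)^2\}=\eps\,t(1-t)\le\eps/4$. On the far event I would use $(Y_n(t)/n-t)^2\le(Y_n(t)/n-t)^4/\delta^2$, bounding its contribution by $(M/\delta^2)\,n\,E\{(Y_n(t)/n-t)^4\}$. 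The fourth central moment of the binomial satisfies $E\{(Y_n(t)-nt)^4\}=\mathcal{O}(n^2)$ — obtainable by the same elementary expansion that produced the lower moments recorded above — so $n\,E\{(Y_n(t)/n-t)^4\}=\mathcal{O}(1/n)\to0$. Hence $\limsup_n|nR_n(t)|\le\eps/4$, and letting $\eps\to0$ gives $nR_n(t)\to0$, which is exactly the assertion with $\varepsilon_n(t)=nR_n(t)$.
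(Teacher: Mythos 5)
The paper contains no proof of this theorem for you to be compared against: it is stated as a classical result, attributed to Voronovskaya (1932), with the surrounding discussion referred to Lorentz's book. Judged on its own merits, your argument is correct, and it is the standard probabilistic proof: the Peano-form Taylor expansion of $f$ about $t$, the identities $E\{Y_n(t)/n-t\}=0$ and $E\{(Y_n(t)/n-t)^2\}=t(1-t)/n$ already recorded in the paper, and a near/far splitting of the remainder $R_n(t)$ in which the near part is bounded by $\varepsilon\, t(1-t)/n$ and the far part by the domination $(Y_n(t)/n-t)^2\le (Y_n(t)/n-t)^4/\delta^2$ together with the binomial fourth-moment bound $E\{(Y_n(t)-nt)^4\}=\mathcal{O}(n^2)$, which indeed follows from the elementary i.i.d.\ expansion ($E\{(Y_n-nt)^4\}=n\mu_4'+3n(n-1)t^2(1-t)^2$ with $\mu_4'$ the centered fourth moment of a single Bernoulli trial). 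One caveat is worth recording: since your $\psi$, $M$ and $\delta$ all depend on the fixed point $t$, what you prove is pointwise convergence $\varepsilon_n(t)\to 0$ at each $t$ where $f''(t)$ exists. This matches the classical pointwise form of Voronovskaya's theorem and the literal hypothesis of the statement; but if one reads the conclusion as uniform convergence of $\varepsilon_n$ on $[0,1]$ (the form typically invoked for uniform rate results such as Theorem 8 of the paper), one must additionally assume $f''$ continuous and rerun your splitting with the Lagrange-form identity $\psi_t(h)=\tfrac12\{f''(\xi)-f''(t)\}$, $|\psi_t(h)|\le\tfrac12\omega_{f''}(|h|)$, so that $\delta$ and the bound $M$ can be chosen independently of $t$.
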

From Theorem \ref{thm of Voronovskaya} it follows that the best rate of convergence of $\mathbb{B}_{n}^{(1)}f$, as $n\to \infty$,
is $\mathcal{O}(n^{-1})$ even if $f$ has continuous second or higher order derivatives \citep{Lorentz-1986-book-bernstein-poly}. This is not as good as in the case of arbitrary polynomial approximation in which if $f$ has continuous $r$th   derivative  then the rate of convergence of a sequence of arbitrary polynomials $P_n$ of degree at most $n$  can be at least $o(n^{-r})$ \citep{Dunham-Jackson-1930}.
\bernstein \cite{Bernstein-1932} generalizes this asymptotic formula to contain terms up to the $(2k)$th derivative and proposes a polynomial constructed based on both $f(i/n)$ and $f''(i/n)$, $i=0,1,\ldots,n$.
Butzer \cite{Butzer-1953} considers some 
combinations of \bernstein polynomials of different degrees
and shows that  they 
have better rate of convergence
which is much faster than $\mathcal{O}(1/n)$.  Costabile et al
\cite{Costabile-etal-BIT-1996} generalize the linear combinations of
the
 \bernstein polynomials proposed by of \cite{Butzer-1953}, \cite{Frentiu-Lin-Comb-Bernstein-poly-1970}  and \cite{May-Canad-J-Math-1976}. The $q$-\bernstein polynomials of
\cite{Phillips-G-M-1997-q-Bernstein} has better rate of convergence. However, if $0<q<1$, the $q$-\bernstein polynomials 
of function $f$ do not approximate $f$. For $q>1$, 
the $q$-\bernstein polynomials do approximate $f$  at a rate of $\mathcal{O}(q^{-m})$ but $f(z)$ has to be analytic in a complex disk with radius greater than $q$.
The analyticity of $f$ may be too restrictive for applications. Even if we are
sure that $f$ is analytic, we have to deal with the choice of $q$. In some cases, the approximations are very sensitive to the choice of $q$.

\section{The Iterated \bernstein Polynomials and the Rate of Convergence}\label{sect: iterated Bernstein poly and rate of convergence}
The error $\Err\{\mathbb{B}_{n}^{(1)}f\}(t)$ is also a continuous
function on $[0,1]$ whose values  at $t_i=i/n$, $i=0, 1, \ldots, n$,
depend on  $f(t_i)$, $i=0, 1, \ldots, n$, only. So we can
approximate this error function by the \bernstein polynomial
$\mathbb{B}_{n}^{(1)}[\Err\{\mathbb{B}_{n}^{(1)}f\}](t)$ and then
subtract the approximated error function from
$\mathbb{B}_{n}^{(1)}f(t)$ to obtain the second order \bernstein
polynomial of degree $n$
\begin{equation}\label{The 2nd-n-th order bernstein approximation}
    \mathbb{B}_{n}^{(2)}f(t)=\mathbb{B}_{n}^{(1)}f(t)
    -\mathbb{B}_{n}^{(1)}[\Err\{\mathbb{B}_{n}^{(1)}f\}](t).
\end{equation}
This idea is closely related to, although was not
initiated by, the proposal of \bernstein \cite{Bernstein-1932} in which the second derivative rather than the error of the \bernstein polynomial is approximated.
Inductively,
\begin{equation}\label{eq: inductive formula of iterated bernstein polynomial}
    \mathbb{B}_{n}^{(k+1)}f(t)=\mathbb{B}_{n}^{(k)}f(t)-\mathbb{B}_{n}\{\mathbb{B}_{n}^{(k)}f(t)-f(t)\},\quad k\ge 1.
\end{equation}
This iteration procedure can be performed further until a satisfactory approximation precision is achieved because the error
$\Err\{\mathbb{B}_{n}^{(k)}f(t)\}=\mathbb{B}_{n}^{(k)}f(t)-f(t)$ can be estimated by $\mathbb{B}_{n}\{\mathbb{B}_{n}^{(k)}f(t)-f(t)\}
=\mathbb{B}_{n}^{(k)}f(t)-\mathbb{B}_{n}^{(k+1)}f(t)$.
\begin{lemma}
Generally the $k$-th order \bernstein polynomial of degree $n$ can be written as
\begin{equation}\label{The k-th-n-th order bernstein approximation}
    \mathbb{B}_{n}^{(k)}f(t)=\sum_{i=1}^k{k\choose i}(-1)^{i-1}\mathbb{B}_{n}^if(t),\quad k\ge 1,\quad 0\le t\le 1.
\end{equation}
Define
$\mathbb{B}_{n}^0f(t)=f(t)$.
Then the error of the $k$-th  \bernstein polynomial   of degree $n$ can be written as
\begin{equation}\label{error of k-th bernstein poly}
\Err\{\mathbb{B}_{n}^{(k)}f(t)\}=\mathbb{B}_{n}^{(k)}f(t)-f(t)
=\sum_{i=0}^k{k\choose i}(-1)^{i-1}\mathbb{B}_{n}^if(t)=-(\mathbb{I}-\mathbb{B}_{n})^kf(t),
\end{equation}
where $\mathbb{I}=\mathbb{B}_{n}^0$ is the identity operator.
\end{lemma}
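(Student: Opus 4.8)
The plan is to work entirely at the level of operators, exploiting the fact that every object in the recurrence \eqref{eq: inductive formula of iterated bernstein polynomial} is built from the single operator $\mathbb{B}_{n}$, so that $\mathbb{B}_{n}$ and the identity $\mathbb{I}=\mathbb{B}_{n}^0$ commute and the binomial theorem applies. First I would recast \eqref{eq: inductive formula of iterated bernstein polynomial} in operator form. Subtracting $f$ from both sides and setting $g=\mathbb{B}_{n}^{(k)}f-f$, the right-hand side becomes $g-\mathbb{B}_{n}g=(\mathbb{I}-\mathbb{B}_{n})g$, so that
\[
\mathbb{B}_{n}^{(k+1)}-\mathbb{I}=(\mathbb{I}-\mathbb{B}_{n})\bigl(\mathbb{B}_{n}^{(k)}-\mathbb{I}\bigr),\qquad k\ge 1.
\]
Introducing the error operator $\mathbb{E}_{k}:=\mathbb{B}_{n}^{(k)}-\mathbb{I}$, this collapses to the one-step relation $\mathbb{E}_{k+1}=(\mathbb{I}-\mathbb{B}_{n})\,\mathbb{E}_{k}$. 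This identity is the heart of the argument and is where the factor $(\mathbb{I}-\mathbb{B}_{n})^{k}$ originates.

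Next I would fix the base case. From \eqref{n-th order bernstein approximation} we have $\mathbb{B}_{n}^{(1)}=\mathbb{B}_{n}$, hence $\mathbb{E}_{1}=\mathbb{B}_{n}-\mathbb{I}=-(\mathbb{I}-\mathbb{B}_{n})$. Iterating the one-step relation then yields, for every $k\ge 1$,
\[
\mathbb{E}_{k}=(\mathbb{I}-\mathbb{B}_{n})^{k-1}\mathbb{E}_{1}=-(\mathbb{I}-\mathbb{B}_{n})^{k},
\]
which, applied to $f$, is exactly the rightmost equality in \eqref{error of k-th bernstein poly}. To recover the summation form I would expand by the binomial theorem, legitimate because $\mathbb{I}$ commutes with $\mathbb{B}_{n}$, obtaining $(\mathbb{I}-\mathbb{B}_{n})^{k}=\sum_{i=0}^{k}\binom{k}{i}(-1)^{i}\mathbb{B}_{n}^{i}$, so that $-(\mathbb{I}-\mathbb{B}_{n})^{k}=\sum_{i=0}^{k}\binom{k}{i}(-1)^{i-1}\mathbb{B}_{n}^{i}$, the middle member of \eqref{error of k-th bernstein poly}. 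Finally, formula \eqref{The k-th-n-th order bernstein approximation} for $\mathbb{B}_{n}^{(k)}$ itself follows by adding $\mathbb{I}$ back to $\mathbb{E}_{k}$: the $i=0$ term is $\binom{k}{0}(-1)^{-1}\mathbb{B}_{n}^{0}=-\mathbb{I}$, which cancels the restored identity and leaves precisely the sum from $i=1$ to $k$.

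I expect no genuine obstacle here. The only points demanding care are the index bookkeeping when the $i=0$ term is stripped off or reinstated, and the remark that $\mathbb{I}$ and $\mathbb{B}_{n}$ commute so that the binomial expansion of $(\mathbb{I}-\mathbb{B}_{n})^{k}$ is valid. An alternative, more laborious route would be a direct induction on \eqref{The k-th-n-th order bernstein approximation} using Pascal's rule $\binom{k+1}{i}=\binom{k}{i}+\binom{k}{i-1}$ in tandem with the recurrence; I would prefer the operator computation above, since it makes the $(\mathbb{I}-\mathbb{B}_{n})^{k}$ structure transparent and disposes of both displayed identities simultaneously.
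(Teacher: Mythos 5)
Your proof is correct, but it takes a genuinely different route from the paper's. The paper proves \eqref{The k-th-n-th order bernstein approximation} by direct induction on the expanded sum: it substitutes the inductive hypothesis into the recurrence \eqref{eq: inductive formula of iterated bernstein polynomial}, shifts the summation index, and combines ${k\choose i}+{k\choose i-1}={k+1\choose i}$ via Pascal's rule to get the $k+1$ case; the error formula \eqref{error of k-th bernstein poly} is then declared ``obvious'' as an afterthought. You instead work with the error operator $\mathbb{E}_k=\mathbb{B}_{n}^{(k)}-\mathbb{I}$ from the start, observe that the recurrence factors as $\mathbb{E}_{k+1}=(\mathbb{I}-\mathbb{B}_{n})\mathbb{E}_{k}$, iterate from $\mathbb{E}_1=-(\mathbb{I}-\mathbb{B}_{n})$ to get $\mathbb{E}_k=-(\mathbb{I}-\mathbb{B}_{n})^k$ in closed product form, and only then expand by the binomial theorem for commuting operators; the formula for $\mathbb{B}_{n}^{(k)}$ itself drops out by restoring $\mathbb{I}$ and cancelling the $i=0$ term. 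In effect you prove the two displayed identities in the opposite order from the paper, making the product form $-(\mathbb{I}-\mathbb{B}_{n})^k$ the primary object rather than a corollary. What your approach buys is transparency and economy: no index bookkeeping, no Pascal's rule, and a clear explanation of \emph{why} the $(\mathbb{I}-\mathbb{B}_{n})^k$ structure appears, all binomial combinatorics being delegated to a single application of the binomial theorem. What the paper's computation buys is elementarity: it never needs to treat $\mathbb{B}_{n}^{(k)}$ and $\mathbb{I}$ as elements of an operator algebra or invoke commutativity, working instead with concrete finite sums of functions. Both arguments rest on the same recurrence and on the linearity of $\mathbb{B}_{n}$, so the difference is one of organization rather than substance, but yours is the cleaner of the two.
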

\begin{proof}
\begin{align}\nonumber
    \mathbb{B}_{n}^{(k+1)}f(t)&=\mathbb{B}_{n}^{(k)}f(t)-\mathbb{B}_{n}\{\mathbb{B}_{n}^{(k)}f(t)-f(t)\}
    \\\nonumber
    &=\sum_{i=1}^k{k\choose i}(-1)^{i-1}\mathbb{B}_{n}^if(t)-\sum_{i=1}^k{k\choose i}(-1)^{i-1}\mathbb{B}_{n}^{i+1}f(t)+\mathbb{B}_{n}f(t)
    \\\nonumber
    &
    =\sum_{i=1}^k{k\choose i}(-1)^{i-1}\mathbb{B}_{n}^if(t)+\sum_{i=2}^{k+1}{k\choose i-1}(-1)^{i-1}\mathbb{B}_{n}^{i}f(t)+\mathbb{B}_{n}f(t)
    \\ \label{The (k+1)-st-n-th order bernstein approximation}
    &
    =\sum_{i=1}^{k+1}{k+1\choose i}(-1)^{i-1}\mathbb{B}_{n}^if(t).
\end{align}
By induction, (\ref{n-th order bernstein approximation}) and (\ref{The (k+1)-st-n-th order bernstein approximation}) assure that (\ref{The k-th-n-th order bernstein approximation}) is true for every positive integer $k$. Equation (\ref{error of k-th bernstein poly}) is then obvious.
\end{proof}

The limit of $\mathbb{B}_{n}^kf(t)$, as $k\to\infty$, has been given
by Kelisky and Rivlin \cite{Kelisky-Rivlin-PJM-1967}. A short and
elementary proof of \cite{Kelisky-Rivlin-PJM-1967}'s result is given
by \cite{Abel-Ivan-Amer-Math-Month-2009}. After we finished the
first version of this paper, we realized that \cite{Sahai-2004}
obtained the formula (\ref{error of k-th bernstein poly}) and
investigated the properties of
$\mathbb{B}_{n}^{(k)}f(t)$ using simulation method.
The cost of $\mathbb{B}_{n}^{(k)}f(t)$ is only some  simple
algebraic calculations in addition to the evaluation of $f$ at
$i/n$, $i=0,1,\ldots,n$.

%
%
About the iterates of the \bernstein operator we have the following result.
\begin{lemma}
For $k\ge 1$,
\begin{align} \label{k-th Bernstein poly of order n}
     \mathbb{B}_{n}^kf(t)&= \sum_{i=0}^{n}
f\big(\sfrac({i},{n})\big) \mathbb{B}_{n}^{k-1}(B_{ni})(t) ,\quad k\ge 1;\quad 0\le t\le 1,
\end{align}
where $\mathbb{B}_{n}^{0}B_{ni}(t)=B_{ni}(t)$, and
\begin{equation}\label{iterates of base bernstein ploys-k} \mathbb{B}_{n}^{k+1}B_{ni}(t)=\mathbb{B}_{n}\{\mathbb{B}_{n}^{k}B_{ni}\}(t),\quad k\ge1.
\end{equation}
When $k=1$,
\begin{align} \label{iterates of base bernstein ploys} \mathbb{B}_{n}^{1}B_{ni}(t)&=\sum_{j=0}^{n}B_{ni}\big(\sfrac({j},{n})\big)B_{nj}(t).
\end{align}

\end{lemma}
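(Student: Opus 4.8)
The plan is to reduce all three assertions to the linearity of the \bernstein operator $\mathbb{B}_{n}$ and to the definition of its iterates; none of them requires a genuine estimate. First note that, by (\ref{n-th order bernstein approximation}), for fixed $n$ and $t$ the quantity $\mathbb{B}_{n}f(t)$ is a finite linear combination of the sampled values $f(\sfrac({i},{n}))$ with coefficients $B_{ni}(t)$ that do not depend on $f$. Hence $\mathbb{B}_{n}$ is a linear operator on $C[0,1]$, and therefore so is every power $\mathbb{B}_{n}^{k-1}$.

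I would handle (\ref{iterates of base bernstein ploys}) first. Taking $f=B_{ni}$ in the defining formula (\ref{n-th order bernstein approximation}) and reading off the values of $B_{ni}$ at the nodes $\sfrac({j},{n})$ gives directly
\begin{equation*}
\mathbb{B}_{n}^{1}B_{ni}(t)=\mathbb{B}_{n}(B_{ni})(t)=\sum_{j=0}^{n}B_{ni}\big(\sfrac({j},{n})\big)B_{nj}(t),
\end{equation*}
which is the asserted $k=1$ case. Equation (\ref{iterates of base bernstein ploys-k}) needs no computation whatsoever: it simply records the recursive definition of the $(k+1)$-st iterate as the composition $\mathbb{B}_{n}^{k+1}=\mathbb{B}_{n}\circ\mathbb{B}_{n}^{k}$, evaluated on the base polynomial $B_{ni}$.

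For the main identity (\ref{k-th Bernstein poly of order n}) I would write $\mathbb{B}_{n}^{k}f=\mathbb{B}_{n}^{k-1}(\mathbb{B}_{n}f)$ and insert the expansion $\mathbb{B}_{n}f=\sum_{i=0}^{n}f(\sfrac({i},{n}))B_{ni}$ coming from (\ref{n-th order bernstein approximation}). Because the sum over $i$ is finite and the scalars $f(\sfrac({i},{n}))$ are independent of $t$, the linearity of $\mathbb{B}_{n}^{k-1}$ allows the operator to be moved inside the sum, yielding
\begin{equation*}
\mathbb{B}_{n}^{k}f(t)=\mathbb{B}_{n}^{k-1}\Big(\sum_{i=0}^{n}f\big(\sfrac({i},{n})\big)B_{ni}\Big)(t)=\sum_{i=0}^{n}f\big(\sfrac({i},{n})\big)\,\mathbb{B}_{n}^{k-1}(B_{ni})(t),
\end{equation*}
with the convention $\mathbb{B}_{n}^{0}B_{ni}=B_{ni}$ taking care of $k=1$. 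This is exactly (\ref{k-th Bernstein poly of order n}).

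The only step meriting a word of justification is the interchange of the linear operator $\mathbb{B}_{n}^{k-1}$ with the finite summation over $i$; this is legitimate precisely because $\mathbb{B}_{n}$ is linear and the index set is finite, so no convergence question arises. Accordingly I expect no real obstacle: the lemma is a bookkeeping consequence of linearity together with the recursive definition of the iterated operator.
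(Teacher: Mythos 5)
Your proposal is correct and follows essentially the same route as the paper, whose entire proof is the remark that the lemma follows ``by induction and the fact that the \bernstein operator is linear''; your write-up simply makes that linearity-plus-iteration argument explicit. No discrepancies to report.
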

\begin{proof}
The  theorem can be easily proved by induction and the fact that the \bernstein operator is linear.
\end{proof}
By (\ref{The k-th-n-th order bernstein approximation}) and (\ref{error of k-th bernstein poly}) we have
\begin{theorem} The $k$-th \bernstein polynomial approximation can be calculated inductively as
\begin{align}
\label{The k-th-n-th order bernstein approximation-2}
    \mathbb{B}_{n}^{(k)}f(t)
   & =\sum_{i=0}^{n}
f\big(\sfrac({i},{n})\big)\sum_{j=1}^k{k\choose j}(-1)^{j-1}\mathbb{B}_{n}^{j-1}B_{ni}(t) ,\quad k\ge 1,\quad 0\le t\le 1.
\end{align}
Clearly, 
for every $k\ge 1$, $\mathbb{B}_{n}^{(k)}$ preserves linear functions. Therefore 
\begin{equation}\label{error of k-th bernstein poly-3}
\Err\{\mathbb{B}_{n}^{(k)}f(t)\}=\sum_{i=0}^{n}
\left\{f\big(\sfrac({i},{n})\big)-f(t)\right\}\sum_{j=1}^k{k\choose j}(-1)^{j-1}\mathbb{B}_{n}^{j-1}B_{ni}(t)  ,\quad k\ge 1;\, 0\le t\le 1.
\end{equation}
\end{theorem}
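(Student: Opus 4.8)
The plan is to prove the three assertions in turn, each one an immediate consequence of the two lemmas already established together with elementary properties of $\mathbb{B}_n$.

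First I would derive the explicit representation (\ref{The k-th-n-th order bernstein approximation-2}). Beginning from (\ref{The k-th-n-th order bernstein approximation}), which expresses $\mathbb{B}_{n}^{(k)}f$ as the alternating combination $\sum_{j=1}^k{k\choose j}(-1)^{j-1}\mathbb{B}_{n}^jf$, I would substitute the iterate identity (\ref{k-th Bernstein poly of order n}), namely $\mathbb{B}_{n}^jf(t)=\sum_{i=0}^{n} f(i/n)\,\mathbb{B}_{n}^{j-1}B_{ni}(t)$, into each term and then interchange the two finite summations. Because both ranges are finite, the interchange is automatic and produces (\ref{The k-th-n-th order bernstein approximation-2}) directly.

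Next I would confirm that $\mathbb{B}_{n}^{(k)}$ reproduces affine functions. The starting point is that $\mathbb{B}_{n}$ itself does so: for $f(t)=at+b$ one has $\mathbb{B}_{n}f(t)=E[f\{Y_n(t)/n\}]=a\,t+b=f(t)$, since $E\{Y_n(t)/n\}=t$. Hence $\mathbb{B}_{n}^jf=f$ for every $j$ when $f$ is affine, and (\ref{The k-th-n-th order bernstein approximation}) collapses to $f\sum_{j=1}^k{k\choose j}(-1)^{j-1}=f$, the coefficient being $1-(1-1)^k=1$ for $k\ge 1$.

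Finally I would obtain the error formula (\ref{error of k-th bernstein poly-3}). The decisive point is that the inner sum $S_i(t)=\sum_{j=1}^k{k\choose j}(-1)^{j-1}\mathbb{B}_{n}^{j-1}B_{ni}(t)$ satisfies $\sum_{i=0}^n S_i(t)=1$; this is simply the evaluation of (\ref{The k-th-n-th order bernstein approximation-2}) at the constant function $1$, a special case of the reproduction of affine functions just verified, and it also follows from the partition of unity $\sum_i B_{ni}\equiv 1$ combined with $\mathbb{B}_{n}1\equiv 1$. Writing $f(t)=f(t)\sum_{i=0}^n S_i(t)$ and subtracting it from (\ref{The k-th-n-th order bernstein approximation-2}) then absorbs the $-f(t)$ term into the sum over $i$ and yields (\ref{error of k-th bernstein poly-3}). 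The whole argument is routine; the only step deserving explicit mention is this identity $\sum_{i=0}^n S_i(t)=1$, which is precisely what makes the clean error representation possible.
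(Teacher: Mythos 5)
Your proposal is correct and is essentially the paper's own argument: the paper presents this theorem as an immediate consequence of the two preceding lemmas, namely substituting the iterate representation (\ref{k-th Bernstein poly of order n}) into the alternating-sum formula (\ref{The k-th-n-th order bernstein approximation}) and interchanging the finite sums, with the error formula then following because $\mathbb{B}_{n}^{(k)}$ preserves linear functions (in particular constants, so the inner sums over $j$ total $1$ when summed over $i$, allowing $f(t)$ to be absorbed into the sum). You have simply made explicit the details the paper compresses into ``Clearly'' and ``Therefore,'' including the verification that $\mathbb{B}_{n}$ reproduces affine functions and the binomial identity $\sum_{j=1}^k{k\choose j}(-1)^{j-1}=1-(1-1)^k=1$.
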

Expression (\ref{The k-th-n-th order bernstein approximation-2}) can easily implemented in computer languages using iterative algorithm.
Define indicator functions
\begin{equation}\label{indicator of ti}
    I_{ni}(t)=\left\{
                \begin{array}{ll}
                  1, & \hbox{$t=\frac{i}{n}$;} \\
                  0, & \hbox{$t\ne \frac{i}{n}$.}
                \end{array}
              \right.
\end{equation}
Then
 $B_{ni}(t)=\mathbb{B}_{n}I_{ni}(t)=\mathbb{B}_{n}^{(1)}I_{ni}(t),\quad i=0, 1, \ldots, n$, and, by Theorem \ref{The k-th-n-th order bernstein approximation}, (\ref{The k-th-n-th order bernstein approximation-2}) and (\ref{error of k-th bernstein poly-3}) can be simplified as
\begin{align}
\label{The k-th-n-th order bernstein approximation-3}
    \mathbb{B}_{n}^{(k)}f(t)
   & =\sum_{i=0}^{n}
f\big(\sfrac({i},{n})\big)\mathbb{B}_{n}^{(k)}I_{ni}(t) ,\quad k\ge 1,\quad 0\le t\le 1.\\
\label{error of k-th bernstein poly-4}
\Err\{\mathbb{B}_{n}^{(k)}f(t)\}&=\sum_{i=0}^{n}
\left\{f\big(\sfrac({i},{n})\big)-f(t)\right\}\mathbb{B}_{n}^{(k)}I_{ni}(t),\quad k\ge 1;\, 0\le t\le 1.
\end{align}
\begin{equation}\label{eq: iterated Bernstein base polynomial}
B_{ni}^{(k)}(t)=\mathbb{B}_{n}^{(k)}I_{ni}(t)
=\sum_{j=1}^k{k\choose j}(-1)^{j-1}\mathbb{B}_{n}^{j-1}B_{ni}(t).
\end{equation}
The following theorem shows that the iterated \bernstein polynomials
, like the classical ones, have no error at the endpoints of $[0,
1]$.
\begin{theorem}\label{thm: iBernstein poly at t=0 and t=1}
For any function $f$ defined on $[0, 1]$ and any integer $k\ge 0$,
\begin{equation}\label{eq: iBernstein poly at t=0 and t=1}
\mathbb{B}_{n}^{(k)}f(0)=f(0),\quad \mathbb{B}_{n}^{(k)}f(1)=f(1).
\end{equation}
 \end{theorem}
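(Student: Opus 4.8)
The plan is to reduce the claim to two elementary facts: that every power of the Bernstein operator leaves the endpoint values of its argument untouched, and a single alternating binomial identity. First I would record the behaviour of the base polynomials at the endpoints. From the definition of $B_{ni}$ one has $B_{ni}(0)=0$ for $i\ge 1$ and $B_{n0}(0)=1$, while $B_{ni}(1)=0$ for $i\le n-1$ and $B_{nn}(1)=1$; in other words $B_{ni}(0)=\delta_{i0}$ and $B_{ni}(1)=\delta_{in}$. Substituting these into the defining sum (\ref{n-th order bernstein approximation}) shows immediately that $\mathbb{B}_{n}$ fixes endpoint values: for \emph{any} function $g$ on $[0,1]$,
\begin{equation*}
\mathbb{B}_{n}g(0)=\sum_{i=0}^{n}g\big(\sfrac({i},{n})\big)B_{ni}(0)=g(0),\qquad \mathbb{B}_{n}g(1)=g(1).
\end{equation*}

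Next I would iterate this observation. Writing $\mathbb{B}_{n}^{i}f=\mathbb{B}_{n}\big(\mathbb{B}_{n}^{i-1}f\big)$ and applying the endpoint-fixing property to $g=\mathbb{B}_{n}^{i-1}f$, a trivial induction on $i$ gives $\mathbb{B}_{n}^{i}f(0)=f(0)$ and $\mathbb{B}_{n}^{i}f(1)=f(1)$ for every $i\ge 0$. This is really the whole content of the argument: each power of the operator preserves the two boundary values, so the alternating combination that defines the iterated polynomial can only rescale them.

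Finally I would feed this into the representation (\ref{The k-th-n-th order bernstein approximation}). At $t=0$,
\begin{equation*}
\mathbb{B}_{n}^{(k)}f(0)=\sum_{i=1}^{k}{k\choose i}(-1)^{i-1}\mathbb{B}_{n}^{i}f(0)=f(0)\sum_{i=1}^{k}{k\choose i}(-1)^{i-1},
\end{equation*}
and the alternating sum equals $1$ for every $k\ge 1$, since $\sum_{i=0}^{k}{k\choose i}(-1)^{i}=(1-1)^{k}=0$ forces $\sum_{i=1}^{k}{k\choose i}(-1)^{i-1}=1$. The identical computation at $t=1$ yields $\mathbb{B}_{n}^{(k)}f(1)=f(1)$, and the case $k=0$, where $\mathbb{B}_{n}^{(0)}f=f$, is immediate. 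I do not anticipate a genuine obstacle here; the only point that requires a moment's care is pinning down the alternating binomial identity and checking the $k=0$ convention against (\ref{The k-th-n-th order bernstein approximation}), and the proof runs cleanly precisely because the endpoint-fixing property of $\mathbb{B}_{n}$ is isolated at the outset.
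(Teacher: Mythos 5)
Your proof is correct, and it takes a somewhat different route from the paper's, though the two share the same underlying content. The paper runs its induction at the level of the \emph{basis polynomials}: using the formula (\ref{k-th Bernstein poly of order n}) it shows $\mathbb{B}_n^k B_{ni}(t)=I_{ni}(t)$ for $t\in\{0,1\}$ and all $k$, then deduces from (\ref{eq: iterated Bernstein base polynomial}) that $B_{ni}^{(k)}(t)=I_{ni}(t)$ at the endpoints --- where the same alternating identity $\sum_{j=1}^k{k\choose j}(-1)^{j-1}=1$ enters, though the paper uses it silently --- and finally sums against the sampled values $f(i/n)$ via (\ref{The k-th-n-th order bernstein approximation-3}). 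You instead isolate the endpoint-interpolation property of the operator itself, namely $\mathbb{B}_n g(0)=g(0)$ and $\mathbb{B}_n g(1)=g(1)$ for \emph{every} function $g$, so that the induction on the powers $\mathbb{B}_n^i f$ becomes trivial, and you then finish with the expansion (\ref{The k-th-n-th order bernstein approximation}) of $\mathbb{B}_n^{(k)}f$ in powers of $\mathbb{B}_n$ applied directly to $f$, making the binomial identity explicit. What your formulation buys is economy: it avoids the detour through the indicator functions $I_{ni}$ and the iterated basis polynomials $B_{ni}^{(k)}$, and it surfaces the one combinatorial fact the argument really rests on; what the paper's version buys is that it establishes the endpoint behaviour of the building blocks $B_{ni}^{(k)}$ themselves, which fits the computational framework (the matrix representation and formula (\ref{The k-th-n-th order bernstein approximation-3})) developed around it. One small point both treatments share: the case $k=0$ holds only by the convention $\mathbb{B}_n^{(0)}f=f$, since the sum in (\ref{The k-th-n-th order bernstein approximation}) is empty for $k=0$; you flag this correctly.
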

\begin{proof}
It is known that $\mathbb{B}_{n}^{0}B_{ni}(t)=B_{ni}(t)=I_{ni}(t)$
for $t=0, 1$, $i=0,\ldots,n$. Assume that
$\mathbb{B}_{n}^{k-1}B_{ni}(t)=I_{ni}(t)$ for $t=0, 1$,
$i=0,\ldots,n$, and some $k\ge 1$. By Theorem \ref{k-th Bernstein
poly of order n}, if $t=0, 1$,
$$\mathbb{B}_{n}^kB_{ni}(t)= \sum_{j=0}^{n}
B_{ni}\big(\sfrac({j},{n})\big) \mathbb{B}_{n}^{k-1}(B_{nj})(t) =
\sum_{j=0}^{n} B_{ni}\big(\sfrac({j},{n})\big)
I_{nj}(t)=B_{ni}(t)=I_{ni}(t).$$ So by induction, for all
nonnegative integers $k$, $t=0, 1$, and $i=0,\ldots,n$,
$\mathbb{B}_{n}^kB_{ni}(t)=I_{ni}(t)$. By (\ref{eq: iterated
Bernstein base polynomial}), we have, if $t=0,1$,
$$B_{ni}^{(k)}(t)=\mathbb{B}_{n}^{(k)}I_{ni}(t)
=\sum_{j=1}^k{k\choose j}(-1)^{j-1}I_{ni}(t)=I_{ni}(t).
$$
Thus by (\ref{The k-th-n-th order bernstein approximation-3}), if
$t=0,1$,
$$\mathbb{B}_{n}^{(k)}f(t)
 =\sum_{i=0}^{n}
f\big(\sfrac({i},{n})\big) I_{ni}(t)=f(t).$$
\end{proof}

Clearly, for each $k\ge 1$, $\mathbb{B}_{n}^{(k)}f(t)$ can be
written as
$$\mathbb{B}_{n}^{(k)}f(t)=\bm{F}_n^{(k)}
\bm{B}_n(t)$$ where $\bm{F}_n^{(k)}=(f_{ni}^{(k)})_{1\times(n+1)}$ is an $(n+1)$ row vector, and
$$\bm{B}_n(t)=\{B_{n0}(t),\ldots, B_{nn}(t)\}^\tr.$$
 If $k=1$,
 $$f_{ni}^{(1)}=f\big(\sfrac({i-1},{n})\big),\quad i=1, \ldots, n+1.$$
Define  $(n+1)\times(n+1)$ square matrix
$\bm{\mathfrak{B}}_n=\bm{B}_n(\bm{u}_n^\tr)=(b_{ij})_{(n+1)\times(n+1)}$
where
$$\bm{u}_n=\big(\sfrac({0},{n}),\sfrac({1},{n}),\ldots,\sfrac({n},{n})\big)^\tr.$$
That is
$$b_{ij}=B_{n,i-1}\big(\sfrac({j-1},{n})\big),\quad i, j=1, \ldots, n+1.$$
It is easy to see that $\bm{\mathfrak{B}}_n$ is nonsingular and have
all the eigenvalues  in $(0, 1]$ among them exactly two  are ones
which correspond to eigenvectors $\bm{u}_n$ and
$\bm{1}_{n+1}=(1,\ldots,1)^\tr\in R^{n+1}$. We have the following
theorem.
\begin{theorem}\label{thm: formula for matrix  Fn}
For $k\ge 1$,
\begin{equation}\label{eq: formula for matrix  Fn}
 \bm{F}_n^{(k)}=\sum_{i=1}^k{k\choose
i}(-1)^{i-1} \bm{F}_n^{(1)}\bm{\mathfrak{B}}_n^{i-1}=\bm{F}_n^{(1)}\bm{\mathfrak{B}}_n^{-1}\{I_{n+1}
-(I_{n+1}-\bm{\mathfrak{B}}_n)^k\},
\end{equation}
where $\bm{\mathfrak{B}}_n^0=I_{n+1}$, the $(n+1)$st order unit
matrix. If $k\ge 1$,
\begin{equation}\label{eq: iterative formula for matrix  Fn}
 \bm{F}_n^{(k+1)}=\bm{F}_n^{(k)}\{I_{n+1}-\bm{\mathfrak{B}}_n\}+\bm{F}_n^{(1)} .
\end{equation}
\end{theorem}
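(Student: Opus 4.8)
The plan is to translate the Bernstein operator into matrix multiplication on coefficient vectors and then reduce everything to elementary linear algebra. The \emph{central observation} is that if a polynomial $g$ has the representation $g(t)=\bm{c}\,\bm{B}_n(t)$ for some $(n+1)$ row vector $\bm{c}=(c_1,\ldots,c_{n+1})$, then applying the Bernstein operator once yields
\[
\mathbb{B}_{n}g(t)=\big(\bm{c}\,\bm{\mathfrak{B}}_n\big)\bm{B}_n(t).
\]
To see this I would evaluate $g$ at the grid points: since $b_{\ell j}=B_{n,\ell-1}\big(\sfrac({j-1},{n})\big)$, the $j$-th entry of the row vector $\bm{c}\,\bm{\mathfrak{B}}_n$ is $\sum_{\ell}c_\ell B_{n,\ell-1}\big(\sfrac({j-1},{n})\big)=g\big(\sfrac({j-1},{n})\big)$; substituting these values into the definition (\ref{n-th order bernstein approximation}) of $\mathbb{B}_n$ and collecting basis polynomials gives the claim. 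Iterating, $\mathbb{B}_n^{\,i}f=\bm{F}_n^{(1)}\bm{\mathfrak{B}}_n^{\,i-1}\bm{B}_n$ for every $i\ge1$, because $\mathbb{B}_nf=\bm{F}_n^{(1)}\bm{B}_n$ by the definition of $\bm{F}_n^{(1)}$.

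With this correspondence in hand, the first equality in (\ref{eq: formula for matrix  Fn}) follows by inserting $\mathbb{B}_n^{\,i}f=\bm{F}_n^{(1)}\bm{\mathfrak{B}}_n^{\,i-1}\bm{B}_n$ into the expansion (\ref{The k-th-n-th order bernstein approximation}) of $\mathbb{B}_n^{(k)}f$, writing the left side as $\bm{F}_n^{(k)}\bm{B}_n$, and equating coefficient vectors. The equating step is legitimate because $B_{n0},\ldots,B_{nn}$ are linearly independent, so the coefficient vector of any polynomial of degree $\le n$ is unique; this delivers $\bm{F}_n^{(k)}=\sum_{i=1}^k{k\choose i}(-1)^{i-1}\bm{F}_n^{(1)}\bm{\mathfrak{B}}_n^{\,i-1}$.

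For the closed form I would treat $\bm{\mathfrak{B}}_n$ as commuting with $I_{n+1}$ and apply the matrix binomial theorem, $\sum_{i=0}^k{k\choose i}(-1)^i\bm{\mathfrak{B}}_n^{\,i}=(I_{n+1}-\bm{\mathfrak{B}}_n)^k$, whence
\[
\sum_{i=1}^k{k\choose i}(-1)^{i-1}\bm{\mathfrak{B}}_n^{\,i-1}
=-\bm{\mathfrak{B}}_n^{-1}\sum_{i=1}^k{k\choose i}(-1)^{i}\bm{\mathfrak{B}}_n^{\,i}
=\bm{\mathfrak{B}}_n^{-1}\big\{I_{n+1}-(I_{n+1}-\bm{\mathfrak{B}}_n)^k\big\},
\]
where the nonsingularity of $\bm{\mathfrak{B}}_n$ noted just above the theorem is exactly what lets me factor out $\bm{\mathfrak{B}}_n^{-1}$. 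Multiplying on the left by the row vector $\bm{F}_n^{(1)}$ gives the second equality.

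Finally, the recursion (\ref{eq: iterative formula for matrix  Fn}) I would obtain directly from the functional recursion (\ref{eq: inductive formula of iterated bernstein polynomial}). Expanding by linearity of $\mathbb{B}_n$ gives $\mathbb{B}_n^{(k+1)}f=\mathbb{B}_n^{(k)}f-\mathbb{B}_n\mathbb{B}_n^{(k)}f+\mathbb{B}_nf$; applying the correspondence to the middle term with $\bm{c}=\bm{F}_n^{(k)}$ and using $\mathbb{B}_nf=\bm{F}_n^{(1)}\bm{B}_n$ rewrites the right side as $\{\bm{F}_n^{(k)}(I_{n+1}-\bm{\mathfrak{B}}_n)+\bm{F}_n^{(1)}\}\bm{B}_n$, and equating coefficients yields the formula. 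I expect no serious obstacle: the proof is routine once the correspondence is set up, and the only points needing care are the bookkeeping of the $1$-based indexing that matches the row-vector/matrix picture to the operator picture, and the remark that linearity of $\mathbb{B}_n$ lets me handle $\mathbb{B}_n\{\mathbb{B}_n^{(k)}f-f\}$ even though $f$ itself need not be expressible through $\bm{B}_n$.
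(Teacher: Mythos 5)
Your proof is correct and follows essentially the same route as the paper's: the paper's (very terse) proof also rests on the observation that evaluating a polynomial written in the Bernstein basis at the grid points amounts to right-multiplying its coefficient row vector by $\bm{\mathfrak{B}}_n$, verifies the case $k=2$, and appeals to induction. Your write-up simply supplies the details the paper omits — the explicit operator--matrix correspondence, the use of the expansion of $\mathbb{B}_n^{(k)}f$ from the earlier lemma together with linear independence of $B_{n0},\ldots,B_{nn}$ to equate coefficient vectors, and the matrix binomial theorem plus nonsingularity of $\bm{\mathfrak{B}}_n$ for the closed form.
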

\begin{proof}
It is easy to show that
$$\bm{F}_n^{(2)}=2\bm{F}_n^{(1)}- \bm{F}_n^{(1)}\bm{\mathfrak{B}}_n.$$
By induction, (\ref{eq: formula for matrix  Fn}) and (\ref{eq:
iterative formula for matrix  Fn}) can be easily proved.
\end{proof}
More importantly, we have
\begin{theorem}\label{thm: formula for matrix  Fn when k=infty}
The  ``optimal'' \bernstein polynomial approximation of
degree $n$ is
\begin{equation}\label{eq: optimal \bernstein polynomial of degree n}
\mathbb{B}_{n}^{(\infty)}f(t)=\bm{F}_n^{(\infty)}
\bm{B}_n(t)  = \bm{F}_n^{(1)}\bm{\mathfrak{B}}_n^{-1} \bm{B}_n(t),
\end{equation}
where \begin{equation}\label{eq: limit of matrix  Fn}
 \bm{F}_n^{(\infty)}=\lim_{k\to\infty}\bm{F}_n^{(k)}=\bm{F}_n^{(1)}\bm{\mathfrak{B}}_n^{-1}.
\end{equation}
Moreover, $\mathbb{B}_{n}^{(\infty)}$ preserves linear functions.
\end{theorem}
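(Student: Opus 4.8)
The plan is to pass to the limit $k\to\infty$ in the closed form for $\bm{F}_n^{(k)}$ obtained above, namely
$$\bm{F}_n^{(k)}=\bm{F}_n^{(1)}\bm{\mathfrak{B}}_n^{-1}\{I_{n+1}-(I_{n+1}-\bm{\mathfrak{B}}_n)^k\}.$$
Since $\bm{\mathfrak{B}}_n$ is nonsingular and the prefactor $\bm{F}_n^{(1)}\bm{\mathfrak{B}}_n^{-1}$ does not depend on $k$, the whole problem reduces to showing that the matrix power $(I_{n+1}-\bm{\mathfrak{B}}_n)^k$ converges to the zero matrix. Granting this, $\bm{F}_n^{(k)}\to\bm{F}_n^{(1)}\bm{\mathfrak{B}}_n^{-1}$, which is precisely the asserted formula for $\bm{F}_n^{(\infty)}$, and the representation $\mathbb{B}_{n}^{(\infty)}f(t)=\bm{F}_n^{(\infty)}\bm{B}_n(t)=\bm{F}_n^{(1)}\bm{\mathfrak{B}}_n^{-1}\bm{B}_n(t)$ then follows at once.

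Write $N=I_{n+1}-\bm{\mathfrak{B}}_n$; the crux is the convergence $N^k\to 0$, which I would obtain from the spectrum of $N$. By the eigenvalue information recorded just before the theorem, every eigenvalue $\lambda$ of $\bm{\mathfrak{B}}_n$ is real and lies in $(0,1]$, so every eigenvalue of $N$ has the form $1-\lambda\in[0,1)$, the value $0$ occurring with multiplicity two from the two unit eigenvalues of $\bm{\mathfrak{B}}_n$. Hence the spectral radius $\rho(N)<1$. A standard fact then gives $N^k\to 0$: passing to the Jordan form of $N$, each block with eigenvalue $\mu$ has $k$-th power whose entries are of order $\binom{k}{m}\mu^{k-m}$, and these vanish as $k\to\infty$ exactly because $|\mu|<1$; equivalently one may invoke Gelfand's formula $\rho(N)=\lim_k\|N^k\|^{1/k}$, so that fixing $\rho(N)<r<1$ yields $\|N^k\|<r^k\to 0$ for all large $k$.

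To verify that $\mathbb{B}_{n}^{(\infty)}$ preserves linear functions, I would transfer the property from the finite iterates. For each fixed $t$ the map $\bm{v}\mapsto\bm{v}\,\bm{B}_n(t)$ is linear on the finite-dimensional space of row vectors, so the convergence $\bm{F}_n^{(k)}\to\bm{F}_n^{(\infty)}$ just established gives the pointwise limit $\mathbb{B}_{n}^{(k)}f(t)\to\mathbb{B}_{n}^{(\infty)}f(t)$ for every $t\in[0,1]$. Since each $\mathbb{B}_{n}^{(k)}$ with $k\ge1$ already preserves linear functions, whenever $f$ is affine we have $\mathbb{B}_{n}^{(k)}f(t)=f(t)$ for all $k$, and letting $k\to\infty$ gives $\mathbb{B}_{n}^{(\infty)}f(t)=f(t)$.

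The only genuinely delicate ingredient is the spectral claim that the eigenvalues of $\bm{\mathfrak{B}}_n$ lie in $(0,1]$ with exactly two equal to $1$; as this is asserted as known in the excerpt, the remainder is bookkeeping with the closed form together with the elementary fact that a matrix of spectral radius below $1$ has powers tending to zero.
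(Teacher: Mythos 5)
Your proposal is correct and takes essentially the same route as the paper: both pass to the limit $k\to\infty$ in the closed form $\bm{F}_n^{(k)}=\bm{F}_n^{(1)}\bm{\mathfrak{B}}_n^{-1}\{I_{n+1}-(I_{n+1}-\bm{\mathfrak{B}}_n)^k\}$, use the asserted spectrum of $\bm{\mathfrak{B}}_n$ in $(0,1]$ to conclude that the eigenvalues of $I_{n+1}-\bm{\mathfrak{B}}_n$ lie in $[0,1)$ and hence $(I_{n+1}-\bm{\mathfrak{B}}_n)^k\to\bm{O}$, and deduce preservation of linear functions by passing the property of the finite iterates $\mathbb{B}_{n}^{(k)}$ to the pointwise limit. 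Your Jordan-form/Gelfand justification of the step ``spectral radius less than one implies powers tend to zero'' simply makes explicit what the paper leaves implicit.
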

\begin{proof} Since all the eigenvalues of matrix
$\bm{\mathfrak{B}}_n$ are in $(0, 1]$ and exactly two of them are
ones, all the eigenvalues of matrix $I_{n+1}-\bm{\mathfrak{B}}_n$
are in $[0, 1)$ and exactly two of them are zeros. Thus
$\lim_{k\to\infty}(I_{n+1}-\bm{\mathfrak{B}}_n)^k =\bm{O}$, the zero
matrix. Because $\mathbb{B}_{n}^{(k)}$ preserves linear functions
for any positive integer $k$, so does $\mathbb{B}_{n}^{(\infty)}$.
This can also be proved by the following facts that
$$\bm{F}_n^{(1)}\bm{\mathfrak{B}}_n=\bm{F}_n^{(1)}\quad \mbox{if and only if}\quad \bm{F}_n^{(1)}\bm{\mathfrak{B}}_n^{-1}=\bm{F}_n^{(1)}$$
and that $\bm{F}_n^{(1)}\bm{\mathfrak{B}}_n=\bm{F}_n^{(1)}$ is true
provided that $f$ is linear.

\end{proof}
Numerical examples (see \S \ref{sect: numerical example}) show that
the maximum absolute approximation error seems to be minimized by
``optimal'' \bernstein polynomial approximation
$\mathbb{B}_{n}^{(\infty)}f(t)$ if $f$ is infinitely differentiable.
For nonsmooth functions such as $f(t)=|t-0.5|$ and fixed $n$, it
seems that the maximum absolute approximation error is minimized by
the iterated \bernstein polynomial approximation
$\mathbb{B}_{n}^{(k)}f(t)$ for some $k$.

 The next theorem shows that if $k>1$ then
$\mathbb{B}_{n}^{(k)}f(t)$ is indeed a better polynomial
approximation of $f$ than the classical \bernstein polynomial.
\begin{theorem}\label{thm: rate of convergence of the k-th Bernstein poly approx}
Suppose that $f\in C_{d_{kr}}[0,1]$, $d_{kr}=2(k-1)+r$ and $r=0, 1$. 
Then
\begin{equation}\label{eq: rate of convergence of the k-th Bernstein poly approx}
|\Err\{\mathbb{B}_{n}^{(k)}f(t)\}| =|\mathbb{B}_{n}^{(k)}f(t)-f(t)|\le C_{kr}''n^{-\frac{d_{kr}}{2}}\omega_{d_{kr}}(n^{-1/2}),
\end{equation}
where $C_{kr}''$ is a constant depending on  $r$ and $k$ only.
\end{theorem}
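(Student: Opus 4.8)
The starting point is the operator identity from the preceding lemma, $\Err\{\mathbb{B}_{n}^{(k)}f\}=-(\mathbb{I}-\mathbb{B}_{n})^{k}f=(-1)^{k+1}A^{k}f$, where I write $A=\mathbb{B}_{n}-\mathbb{I}$; thus it suffices to bound $|A^{k}f(t)|$. The plan is to induct on $k$, the base case $k=1$ being exactly Theorem~\ref{rate of conergence of berinstein polynomial approx}, since $d_{1r}=r$ and $n^{-d_{1r}/2}=n^{-r/2}$. Let $P(k)$ denote the assertion of the theorem for a given $k$.

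The engine of the induction is a quantitative version of Voronovskaya's expansion (Theorem~\ref{thm of Voronovskaya}). Using $\mathbb{B}_{n}g(t)=\E[g(Y_{n}(t)/n)]$ and Taylor-expanding $g$ about $t$ with $\Delta=Y_{n}(t)/n-t$, the facts $\E\Delta=0$, $\E\Delta^{2}=t(1-t)/n$, and $\E|\Delta|^{3}=O(n^{-3/2})$ from the preliminary section, together with the standard inequality $\omega(\lambda\delta)\le(1+\lambda)\omega(\delta)$ taken at $\delta=n^{-1/2}$, I would prove that for $g\in C^{2}[0,1]$,
$$A g(t)=\frac{t(1-t)}{2n}\,g''(t)+\frac{1}{n}\,\theta_{n}[g](t),\qquad \|\theta_{n}[g]\|_{\infty}\le C\,\omega_{2}(g'';n^{-1/2}).$$
This is obtained by writing the remainder as $\tfrac12\E[(g''(\xi)-g''(t))\Delta^{2}]$ and splitting the range of $|\Delta|$ at $n^{-1/2}$.

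With this in hand I peel one factor of $A$ from the inside: by linearity of $A^{k-1}$,
$$A^{k}f=A^{k-1}(Af)=\frac{1}{n}A^{k-1}\!\big(\tfrac{t(1-t)}{2}f''\big)+\frac{1}{n}A^{k-1}(\theta_{n}[f]).$$
The crucial observation is that the leading function $g:=\tfrac{t(1-t)}{2}f''$ does \emph{not} depend on $n$ and lies in $C^{d_{kr}-2}=C^{d_{k-1,r}}$, so the induction hypothesis $P(k-1)$ applies to it verbatim and gives $|A^{k-1}g|\le C\,n^{-d_{k-1,r}/2}\omega_{d_{k-1,r}}(g^{(d_{k-1,r})};n^{-1/2})$. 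Multiplying by $1/n$ reproduces exactly the target rate $n^{-(k-1)-r/2}=n^{-d_{kr}/2}$, and since the top-order term of $g^{(d_{k-1,r})}$ is $\tfrac{t(1-t)}{2}f^{(d_{kr})}$, its modulus is controlled by $\omega_{d_{kr}}(f^{(d_{kr})};n^{-1/2})$ up to a product-rule correction. Equivalently, unwinding the recursion $k-1$ times reduces matters to a single application of $A$ to an order-$2(k-1)$ differential expression in $f$ lying in $C^{r}$, to which Theorem~\ref{rate of conergence of berinstein polynomial approx} applies directly and supplies the final factor $n^{-r/2}\omega_{d_{kr}}(n^{-1/2})$.

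The main obstacle is controlling the two families of error terms the peeling generates: the $n$-dependent remainders $\tfrac{1}{n}A^{k-1}(\theta_{n}[f])$, and their analogues at each level, together with the lower-order derivative terms produced when the iterated operators hit the variable coefficient $\tfrac{t(1-t)}{2}$ through Leibniz's rule. For the former I would use only the crude stability bound $\|A^{j}h\|_{\infty}\le 2^{j}\|h\|_{\infty}$ combined with the sup-norm estimate on $\theta_{n}$; for the latter, the product-rule corrections contribute ``saturation'' terms of the shape $n^{-(k-1)-1/2}\|f^{(d_{kr}-1)}\|_{\infty}$. Showing that all of these are dominated by, and hence absorbable into, the principal bound $C_{kr}''n^{-d_{kr}/2}\omega_{d_{kr}}(n^{-1/2})$ is the delicate heart of the argument: this is precisely where the borderline smoothness $d_{kr}=2(k-1)+r$ is used, and where one must argue that in the relevant regime $\omega_{d_{kr}}(n^{-1/2})$ genuinely dominates the product-rule remainders, so that the interplay between the binomial moments and the moduli of continuity closes the induction.
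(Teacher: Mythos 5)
Your route is precisely the one the paper intends --- its entire proof is the single sentence ``This result follows easily from Theorems \ref{rate of conergence of berinstein polynomial approx} and \ref{thm of Voronovskaya}'' --- and your write-up is far more explicit: you reduce to bounding $(\mathbb{I}-\mathbb{B}_{n})^{k}f$, peel off one factor per induction step via a quantitative Voronovskaya expansion, and track the error terms this generates. But the step you yourself call ``the delicate heart of the argument'' is never carried out: the claim that the Leibniz-rule corrections (terms of the shape $n^{-(k-1)-1/2}\|f^{(d_{kr}-1)}\|_{\infty}$, together with moduli of continuity of lower-order derivatives of $f$) can be absorbed into $C_{kr}''\,n^{-d_{kr}/2}\,\omega_{d_{kr}}(n^{-1/2})$. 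This is a genuine gap, and it cannot be closed, because the required domination is false: the modulus of the top derivative can vanish while the lower-order terms do not.

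Concretely, take $k=2$, $r=0$, $f(t)=t^{2}$. Then $\mathbb{B}_{n}f(t)=t^{2}+t(1-t)/n$, so $(\mathbb{I}-\mathbb{B}_{n})f(t)=-t(1-t)/n$, and since $\mathbb{B}_{n}\{t(1-t)\}=(1-1/n)\,t(1-t)$ one computes
$$\Err\{\mathbb{B}_{n}^{(2)}f\}(t)=-(\mathbb{I}-\mathbb{B}_{n})^{2}f(t)=\frac{t(1-t)}{n^{2}}>0,\qquad 0<t<1,$$
whereas $f''\equiv 2$ gives $\omega_{d_{kr}}(n^{-1/2})=\omega_{2}(n^{-1/2})=0$. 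So the bound you are driving toward would force the error to vanish identically, which it does not. What your induction genuinely yields is at best an estimate of the form $n^{-d_{kr}/2}\bigl\{\omega_{d_{kr}}(n^{-1/2})+n^{-1/2}\,(\mbox{norms of lower-order derivatives of } f)\bigr\}$, in which the extra terms are unavoidable. Note that this example shows the theorem as stated --- with the top modulus alone on the right-hand side --- is itself incorrect, and the paper's one-sentence proof glosses over exactly the point where your argument stalls; a repaired statement must either retain the lower-order terms or assume more smoothness and claim only a rate, e.g.\ $f\in C^{(2k)}$ with error $\mathcal{O}(n^{-k})$, as in the paper's remarks. So your proposal is an honest execution of the intended strategy, but the absorption step it defers is not a technical detail: it is the point at which the argument, and the statement, break down.
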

\begin{proof}
This result follows easily from Theorems \ref{rate of conergence of berinstein polynomial approx}
and \ref{thm of Voronovskaya}.
\end{proof}
\begin{rem}
From this theorem with $k=2$ and $r=0$, we see that
   if $f$ has continuous second derivative then the rate of convergence of the second \bernstein polynomial approximation $\mathbb{B}_{n}^{(2)}f$ is at least $o(n^{-1})$.
\end{rem}
\begin{rem}
From Theorem \ref{thm: rate of convergence of the k-th Bernstein poly approx} with $k=2$ we see that if $f$ has continuous fourth derivative, then the rate of convergence of $\mathbb{B}_{n}^{(2)}f$
can be as fast as $\mathcal{O}(n^{-2})$. This seems the fastest rate that $\mathbb{B}_{n}^{(2)}f$ can reach even if $f$ has continuous fifth or higher derivatives.
\end{rem}
\begin{rem}
 It can also be  proved that if $f$ has continuous $(2k)$th derivative, then the rate of
  convergence of $\mathbb{B}_{n}^{(k)}f$
can be as fast as $\mathcal{O}(n^{-k})$. Although these improvements upon $\mathbb{B}_{n}f(t)$
are still not as good as those stated in Theorem \ref{thm of Dunham Jackson}, they are good enough for application in computer graphics and statistics.
\end{rem}
\begin{rem}
 It is a very interesting project to investigate the relationship between $C_{kr}''$ and $k$, and  the rate
 of convergence of $\mathbb{B}_{n}^{(\infty)}f$ which is conjectured
 to be exponential.
\end{rem}
\section{The Derivatives and Integrals of $\mathbb{B}_{n}^{(k)}f(t)$ and Applications}
\subsection{The Derivatives of $\mathbb{B}_{n}^{(k)}f(t)$}
\begin{theorem} \label{thm: derivatives of iBernstein poly}
 For any positive integers $k$ and $r\le n$,
 \begin{equation}\label{eq: r-th derivatives of iBernstein poly}
    \frac{d^r}{dt^r}\mathbb{B}_{n}^{(k)}f(t)
   =\frac{n!}{(n-r)!}\sum_{i=0}^{n-r} \sum_{j=1}^{k}(-1)^{j-1} {k\choose j} \Delta^r(\mathbb{B}_{n}^{j-1}f)\big(\sfrac({i},{n})\big) B_{n-r,i}(t),
\end{equation}
where $\Delta^r$ is the $r$th forward difference operator with increment $h=1/n$, $\Delta f(t)=f(t+h)-f(t)$,
$$\Delta^{r}f(t)=\sum_{i=0}^r {r\choose i} (-1)^i f\big(t+\sfrac(r-i,h)\big).$$
\end{theorem}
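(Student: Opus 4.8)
The plan is to reduce the statement to the classical differentiation formula for a single \bernstein polynomial and then to exploit the linearity of the expansion (\ref{The k-th-n-th order bernstein approximation}). By that identity
\[
\mathbb{B}_{n}^{(k)}f(t)=\sum_{j=1}^k{k\choose j}(-1)^{j-1}\mathbb{B}_{n}^jf(t),
\]
and since $d^r/dt^r$ is a linear operator it suffices to compute $\frac{d^r}{dt^r}\mathbb{B}_{n}^jf(t)$ for each fixed $j$ and then recombine with the weights ${k\choose j}(-1)^{j-1}$. The key observation is that $\mathbb{B}_{n}^jf=\mathbb{B}_{n}(\mathbb{B}_{n}^{j-1}f)$ is an ordinary \bernstein polynomial of degree $n$ whose nodal values are $g(\sfrac({i},{n}))$ with $g=\mathbb{B}_{n}^{j-1}f$; thus the entire problem collapses to differentiating $\mathbb{B}_{n}g$ for a generic continuous $g$.

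First I would establish the single-operator formula
\[
\frac{d^r}{dt^r}\mathbb{B}_{n}g(t)=\frac{n!}{(n-r)!}\sum_{i=0}^{n-r}\Delta^r g\big(\sfrac({i},{n})\big)B_{n-r,i}(t)
\]
by induction on $r$. The base case $r=1$ rests on the elementary identity $\frac{d}{dt}B_{ni}(t)=n\{B_{n-1,i-1}(t)-B_{n-1,i}(t)\}$, valid with the convention $B_{n-1,-1}\equiv B_{n-1,n}\equiv 0$; summing against $g(\sfrac({i},{n}))$ and shifting the index in the first of the two resulting sums makes the boundary terms drop out and telescopes the rest into $n\sum_{i=0}^{n-1}\Delta g(i/n)B_{n-1,i}(t)$. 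For the inductive step I would use the general coefficient identity
\[
\frac{d}{dt}\sum_{i=0}^{m}c_iB_{mi}(t)=m\sum_{i=0}^{m-1}(c_{i+1}-c_i)B_{m-1,i}(t),
\]
valid for arbitrary scalars $c_0,\dots,c_m$, together with the remark that if $c_i=\Delta^{s}g(i/n)$ then $c_{i+1}-c_i=\Delta^{s+1}g(i/n)$, because the outer difference again has step $h=1/n$. Each differentiation therefore lowers the basis degree by one, raises the difference order by one, and peels off the next factor in the product $n(n-1)\cdots(n-r+1)=n!/(n-r)!$.

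Finally I would substitute $g=\mathbb{B}_{n}^{j-1}f$ into the single-operator formula, obtaining
\[
\frac{d^r}{dt^r}\mathbb{B}_{n}^jf(t)=\frac{n!}{(n-r)!}\sum_{i=0}^{n-r}\Delta^r(\mathbb{B}_{n}^{j-1}f)\big(\sfrac({i},{n})\big)B_{n-r,i}(t),
\]
and sum over $j$; interchanging the two finite sums over $i$ and $j$ produces exactly (\ref{eq: r-th derivatives of iBernstein poly}). I expect the only delicate point to be the bookkeeping inside the induction, namely keeping the boundary indices $i=0$ and $i=n$ under control so that the telescoping is clean and checking that the difference step $h=1/n$ is genuinely preserved at every stage even though the basis degree falls from $n$ to $n-r$. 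Once the coefficient identity above is verified this is entirely routine.
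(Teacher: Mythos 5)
Your proposal is correct, but it follows a genuinely different route from the paper's. The paper proves the theorem by induction on $k$: starting from the classical $r=1$ formula (\ref{eq: derivatives of iBernstein poly k=1}), it differentiates the defining recursion $\mathbb{B}_{n}^{(k+1)}f=\mathbb{B}_{n}^{(k)}f-\mathbb{B}_{n}\{\mathbb{B}_{n}^{(k)}f-f\}$ from (\ref{eq: inductive formula of iterated bernstein polynomial}) and merges binomial coefficients to pass from $k$ to $k+1$; it carries this out explicitly only for $r=1$ and dismisses general $r$ with ``similarly \ldots by induction.'' You bypass induction on $k$ entirely: you expand $\mathbb{B}_{n}^{(k)}f=\sum_{j=1}^k{k\choose j}(-1)^{j-1}\mathbb{B}_{n}^{j}f$ via (\ref{The k-th-n-th order bernstein approximation}), note that each iterate $\mathbb{B}_{n}^{j}f=\mathbb{B}_{n}(\mathbb{B}_{n}^{j-1}f)$ is itself an ordinary Bernstein polynomial with nodal values $(\mathbb{B}_{n}^{j-1}f)(i/n)$, and then apply the classical $r$th-derivative formula for $\mathbb{B}_{n}g$, proved by induction on $r$ through the coefficient identity $\frac{d}{dt}\sum_{i=0}^{m}c_iB_{mi}(t)=m\sum_{i=0}^{m-1}(c_{i+1}-c_i)B_{m-1,i}(t)$. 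Your reduction is more modular and in fact more complete than the printed proof: all $r$ and $k$ are handled uniformly in one stroke, the only induction is the standard one on $r$ for a single Bernstein polynomial (available in Lorentz's book, which the paper cites), and the ${k\choose j}$ bookkeeping disappears because the weights simply ride along by linearity. What the paper's route buys instead is that it works directly from the recursion defining the iterates, exhibiting how the derivative formula propagates through each iteration without invoking the closed-form expansion. The one step you should make fully explicit in a write-up is the observation you already flagged: with $c_i=\Delta^{s}g(i/n)$ one has $c_{i+1}-c_i=\Delta^{s+1}g(i/n)$ precisely because the difference increment remains $h=1/n$ even as the basis degree drops from $n$ to $n-r$; once that is recorded, the argument is complete and valid.
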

\begin{proof}
If $k=1$, it is well known that for any function $f$
\begin{equation}\label{eq: derivatives of iBernstein poly k=1}
\frac{d}{dt}\mathbb{B}_{n}^{(1)}f(t)
=\frac{d}{dt}\mathbb{B}_{n}f(t)
=n\sum_{i=0}^{n-1} \Delta f\big(\sfrac({i},{n})\big) B_{n-1,i}(t).
\end{equation}
Assume that (\ref{eq: r-th derivatives of iBernstein poly}) with $r=1$ is true for the $k$th iterated \bernstein polynomial of any function $f$. By  (\ref{eq: inductive formula of iterated bernstein polynomial}) we have
\begin{align}\nonumber\label{eq: (k+1)st derivative of iBernstein-1}
        \frac{d}{dt}\mathbb{B}_{n}^{(k+1)}f(t)
   &=\frac{d}{dt}\left[\mathbb{B}_{n}^{(k)}f(t)-\mathbb{B}_{n}\{\mathbb{B}_{n}^{(k)}f(t)-f(t)\}\right]\\
   &=\frac{d}{dt} \mathbb{B}_{n}^{(k)}f(t)+\frac{d}{dt}\mathbb{B}_{n}f(t)-\frac{d}{dt}\mathbb{B}_{n}\{\mathbb{B}_{n}^{(k)}\}f(t).
\end{align}
It follows from (\ref{eq: derivatives of iBernstein poly k=1})
and (\ref{k-th Bernstein poly of order n}) that
\begin{align}\nonumber
\frac{d}{dt}\mathbb{B}_{n}\{\mathbb{B}_{n}^{(k)}\}f(t)
&=n\sum_{i=0}^{n-1}\Delta \mathbb{B}_{n}^{(k)}f\big(\sfrac({i},{n})\big)B_{n-1,i}(t)\\
\nonumber\label{eq: (k+1)st derivative of iBernstein-2}
&=n\sum_{i=0}^{n-1}\sum_{j=0}^{n}
f\big(\sfrac({j},{n})\big)\sum_{\ell=1}^k{k\choose \ell}(-1)^{\ell-1}\Delta\mathbb{B}_{n}^{\ell-1}B_{nj}\big(\sfrac({i},{n})\big) B_{n-1,i}(t)
\\
&=n\sum_{i=0}^{n-1} \sum_{\ell=1}^k{k\choose \ell}(-1)^{\ell-1}\Delta\mathbb{B}_{n}^{\ell}f\big(\sfrac({i},{n})\big)B_{n-1,i}(t).
\end{align}
Combining  (\ref{eq: derivatives of iBernstein poly k=1}),  (\ref{k-th Bernstein poly of order n}), (\ref{eq: (k+1)st derivative of iBernstein-1}), and (\ref{eq: (k+1)st derivative of iBernstein-2}) we arrive at
\begin{equation}\label{eq: (k+1)st derivative of iBernstein}
    \frac{d}{dt}\mathbb{B}_{n}^{(k+1)}f(t)=n\sum_{i=0}^{n-1} \sum_{j=1}^{k+1}(-1)^{j-1} {k+1\choose j} \Delta\mathbb{B}_{n}^{j-1}f\big(\sfrac({i},{n})\big) B_{n-1,i}(t).
\end{equation}
The proof of (\ref{eq: r-th derivatives of iBernstein poly}) with $r=1$ and $k\ge 1$ is complete by induction. Similarly
(\ref{eq: r-th derivatives of iBernstein poly}) with $r\ge1$ and $k\ge 1$ can be proved using induction.
\end{proof}
It is not hard to prove by adopting the method of \cite{Lorentz-1986-book-bernstein-poly} that
\begin{theorem}\label{thm: convergence of derivatives of iBernstein polynomials}
    (i) If $f$ has continuous $r$th derivative $f^{(r)}$ on $[0, 1]$, then for each fixed $k$, as $n\to\infty$,  $\frac{d^r}{dt^r}\mathbb{B}_{n}^{(k)}f(t)$ converge to $f^{(r)}(t)$ uniformly on $[0, 1]$.
\\
(ii) If $f$ in bounded on $[0, 1]$ and its $r$th derivative $f^{(r)}(t)$ exists at $t\in [0, 1]$, then for each fixed $k$, as $n\to\infty$,  $\frac{d^r}{dt^r}\mathbb{B}_{n}^{(k)}f(t)$ converge to $f^{(r)}(t)$.
\end{theorem}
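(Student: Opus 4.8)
The plan is to reduce both parts to the behaviour of a single power $\mathbb{B}_{n}^{j}$ of the \bernstein operator and then to induct on $j$, using the classical \bernstein theorems as the base case. Differentiating the representation (\ref{The k-th-n-th order bernstein approximation}) term by term (a finite sum of polynomials, so this is trivially legitimate) gives
$$\frac{d^r}{dt^r}\mathbb{B}_{n}^{(k)}f(t)=\sum_{j=1}^{k}{k\choose j}(-1)^{j-1}\frac{d^r}{dt^r}\mathbb{B}_{n}^{j}f(t).$$
Since $\sum_{j=1}^{k}{k\choose j}(-1)^{j-1}=-\big[(1-1)^k-1\big]=1$, once I show that $\frac{d^r}{dt^r}\mathbb{B}_{n}^{j}f(t)\to f^{(r)}(t)$ (uniformly for (i), pointwise for (ii)) for each fixed $j\ge1$, the claim for $\mathbb{B}_{n}^{(k)}$ follows by taking this finite linear combination. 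Thus both parts reduce to a single-iterate statement.

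For the single iterate I would induct on $j$. The base case $j=1$ is exactly the classical theorem of Lorentz \cite{Lorentz-1986-book-bernstein-poly} on convergence of the derivatives of $\mathbb{B}_{n}f$ (its uniform form for (i), its local form for (ii)). For the inductive step of part (i), write $\mathbb{B}_{n}^{j+1}f=\mathbb{B}_{n}(\mathbb{B}_{n}^{j}f)$ and apply the $k=1$ case of the derivative formula (\ref{eq: r-th derivatives of iBernstein poly}) to $g_{n}:=\mathbb{B}_{n}^{j}f$, obtaining $\frac{d^r}{dt^r}\mathbb{B}_{n}g_{n}(t)=\frac{n!}{(n-r)!}\sum_{i=0}^{n-r}\Delta^{r}g_{n}\big(\frac{i}{n}\big)B_{n-r,i}(t)$. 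Because $g_{n}$ is a polynomial, the finite-difference mean value theorem lets me write $\Delta^{r}g_{n}(i/n)=n^{-r}g_{n}^{(r)}(\xi_{ni})$ with $\xi_{ni}\in(i/n,(i+r)/n)$; since $\frac{n!}{(n-r)!}n^{-r}\to1$ and $\sum_{i}B_{n-r,i}(t)=1$, the right-hand side is a \bernstein-type average of $g_{n}^{(r)}$. The induction hypothesis $g_{n}^{(r)}\to f^{(r)}$ uniformly, the uniform continuity of $f^{(r)}$ on $[0,1]$, and the concentration estimate $\sum_{i}\big(\frac{i}{n-r}-t\big)^2 B_{n-r,i}(t)=t(1-t)/(n-r)$ then force this average to $f^{(r)}(t)$ uniformly, closing the induction. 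This is the step that genuinely uses ``the method of Lorentz,'' and it needs only \emph{uniform} convergence from the previous stage.

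The main obstacle is part (ii). The inductive scheme above feeds on uniform convergence of $g_{n}^{(r)}$, whereas here only the single value $f^{(r)}(t)$ is assumed to exist, so pointwise convergence at $t$ does not propagate through another pass of $\mathbb{B}_{n}$. To circumvent this I would split $f=p+\rho$, where $p(s)=\sum_{m=0}^{r}\frac{f^{(m)}(t)}{m!}(s-t)^{m}$ is the degree-$r$ Taylor polynomial of $f$ at $t$ (so $p^{(r)}\equiv f^{(r)}(t)$), and $\rho=f-p$ is bounded with $\rho(s)=o(|s-t|^{r})$ as $s\to t$ by Taylor's theorem with Peano remainder. By linearity and part (i) applied to the smooth polynomial $p$, $\frac{d^r}{dt^r}\mathbb{B}_{n}^{(k)}p(t)\to p^{(r)}(t)=f^{(r)}(t)$, so the entire difficulty is concentrated in showing $\frac{d^r}{dt^r}\mathbb{B}_{n}^{(k)}\rho(t)\to0$. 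This is a purely local estimate: using the derivative formula (\ref{eq: r-th derivatives of iBernstein poly}) together with the decay of $\rho$ near $t$, the boundedness of $\rho$ away from $t$, and the concentration of the \bernstein weights about $t$, the contribution is negligible, exactly as in Lorentz's pointwise argument. The delicate point requiring the most care is the bookkeeping that keeps this remainder estimate valid through the iteration $\mathbb{B}_{n}^{j}$, since each application of $\mathbb{B}_{n}$ partly smears out the local smallness of $\rho$.
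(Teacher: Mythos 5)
The paper itself gives no proof of this theorem: it is stated immediately after the remark that ``it is not hard to prove by adopting the method of \cite{Lorentz-1986-book-bernstein-poly}'', so your proposal has to be judged on its own merits rather than against a written argument. Part (i) of your proposal is correct and essentially complete. The reduction of $\mathbb{B}_{n}^{(k)}$ to the powers $\mathbb{B}_{n}^{j}$ via (\ref{The k-th-n-th order bernstein approximation}) and the identity $\sum_{j=1}^{k}\binom{k}{j}(-1)^{j-1}=1$ is sound; the base case $j=1$ is Lorentz's classical theorem; and your inductive step is valid: writing $(\mathbb{B}_{n}g_{n})^{(r)}(t)=\frac{n!}{(n-r)!\,n^{r}}\sum_{i}g_{n}^{(r)}(\xi_{ni})B_{n-r,i}(t)$ with $g_{n}=\mathbb{B}_{n}^{j}f$, the uniform induction hypothesis lets you replace $g_{n}^{(r)}(\xi_{ni})$ by $f^{(r)}(\xi_{ni})+o(1)$, and uniform continuity of $f^{(r)}$ together with $|\xi_{ni}-i/(n-r)|=O(1/n)$ reduces the sum to $\mathbb{B}_{n-r}f^{(r)}(t)+o(1)$, which converges uniformly. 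This step genuinely needs, and only needs, uniform convergence from the previous stage.

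Part (ii), however, has a genuine gap, and you have located it yourself without closing it. After the Taylor splitting $f=p+\rho$, everything rests on $\frac{d^{r}}{dt^{r}}\mathbb{B}_{n}^{j}\rho(t)\to0$ for $j\ge2$; for $j=1$ this is Lorentz's theorem, but for $j\ge2$ your proposal only asserts that the ``bookkeeping through the iteration'' works. Two things are missing. First, the route you indicate --- formula (\ref{eq: r-th derivatives of iBernstein poly}) plus $|\rho(u)|\le\varepsilon|u-t|^{r}$ near $t$, boundedness far away, and concentration of the weights --- fails if executed literally, even for $j=1$, when $r\ge1$: bounding $|\Delta^{r}\rho(i/n)|\le 2^{r}\varepsilon\left(|i/n-t|+r/n\right)^{r}$ on the near range and using $\sum_{i}\left(|i/n-t|+r/n\right)^{r}B_{n-r,i}(t)=O(n^{-r/2})$ gives a near-part bound of order $n^{r}\cdot\varepsilon\cdot n^{-r/2}=\varepsilon\,n^{r/2}$, which diverges. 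Lorentz's pointwise proof does not use this crude finite-difference bound; it uses the representation $B_{ni}^{(r)}(t)=\{t(1-t)\}^{-r}K_{r}(i-nt;n,t)B_{ni}(t)$, with $K_{r}$ a degree-$r$ Krawtchouk-type polynomial, and exploits the resulting moment cancellations. Second, for $j\ge2$ you need a quantitative propagation lemma that you never state: if $\|\rho\|_{\infty}\le M$ and $|\rho(u)|\le\varepsilon|u-t|^{r}$ for $|u-t|\le\delta$, then for $|u-t|\le\delta/2$ one has $|\mathbb{B}_{n}\rho(u)|\le C\varepsilon\left(|u-t|^{r}+n^{-r/2}\right)+M\eta_{n}(\delta)$ with $\eta_{n}(\delta)$ superpolynomially small; that is, local smallness is inherited only up to the smoothing scale $n^{-1/2}$, so the hypothesis $\rho=o(|u-t|^{r})$ is \emph{not} literally preserved by $\mathbb{B}_{n}$. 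One must then verify that this degraded bound, inserted into the Krawtchouk representation, still yields $O(\varepsilon)$ against the $n^{r}$-sized derivative weights (it does: the extra $n^{-r/2}$ term contributes $\varepsilon\,n^{-r/2}\cdot O(n^{r/2})=O(\varepsilon)$). With such a lemma your plan closes; without it, part (ii) for $k\ge2$ rests on an acknowledged but unproven claim.
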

Numerical examples show that the larger the $r$ is, the slower the above convergence is.

For any positive integers $k$, the second  derivative of the
iterated \bernstein polynomial $\mathbb{B}^{(k)}f$ is
 \begin{equation}\label{eq: 2nd derivatives of iBernstein poly}
    \frac{d^2}{dt^2}\mathbb{B}_{n}^{(k)}f(t)
   =n(n-1)\sum_{i=0}^{n-2} \sum_{j=1}^{k}(-1)^{j-1} {k\choose j}
\Delta^2(\mathbb{B}_{n}^{j-1}f)\big(\sfrac({i},{n})\big)  B_{n-2,i}(t).
\end{equation}
It is well known that if $f$ is convex on $[0, 1]$, then $\frac{d^2}{dt^2}\mathbb{B}_{n}^{(1)}f(t)\ge 0$ and thus $\mathbb{B}_{n}^{(1)}f(t)$ is also convex and  $\mathbb{B}_{n}^{(1)}f(t)\ge f(t)$ on $[0, 1]$.
 So the classical \bernstein polynomials preserve the convexity of the original function and
 has nonnegative errors. However examples of \S \ref{sect: numerical example} show that when $k\ge 2$ the iterated \bernstein polynomial $\mathbb{B}^{(k)}f$ does not preserve the convexity of the original function unconditionally.
The iterated \bernstein polynomials still preserve the monotonicity
of $f$ if it is not too ``flat'' anywhere.
\begin{theorem}\label{thm: preservation of monotonicity}
If $f$ is strictly increasing (decreasing) on $[0,1]$, for any $k\ge
1$, $\mathbb{B}_n^{(k)}f(t)$ is also strictly increasing
(decreasing) on $[0,1]$.
\end{theorem}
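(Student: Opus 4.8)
The plan is to reduce strict monotonicity of $\mathbb{B}_n^{(k)}f$ to the strict positivity of the Bernstein coefficients of its derivative, and then to exploit the stochastic structure of the difference operator induced by $\mathbb{B}_n$. By Theorem~\ref{thm: derivatives of iBernstein poly} with $r=1$,
\[
\frac{d}{dt}\mathbb{B}_n^{(k)}f(t)=n\sum_{i=0}^{n-1}\delta_i^{(k)}B_{n-1,i}(t),\qquad
\delta_i^{(k)}:=\sum_{j=1}^{k}(-1)^{j-1}\binom{k}{j}\Delta(\mathbb{B}_n^{j-1}f)(i/n).
\]
Since $B_{n-1,i}(t)\ge 0$, $\sum_{i=0}^{n-1}B_{n-1,i}(t)=1$, and $B_{n-1,i}(t)>0$ on $(0,1)$, it suffices to prove $\delta_i^{(k)}>0$ for every $i$: then the derivative is strictly positive throughout $[0,1]$ (at the endpoints a single basis term survives and is positive), which forces strict monotonicity. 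The decreasing case follows by replacing $f$ with $-f$.

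Next, every iterate $\mathbb{B}_n^{j-1}f$ is strictly increasing, because the classical \bernstein operator preserves strict monotonicity (its coefficients $f(i/n)$ increase in $i$, so the coefficients $\Delta f(i/n)$ of its derivative are positive; apply this repeatedly, see \cite{Lorentz-1986-book-bernstein-poly}). Hence each forward difference $\Delta(\mathbb{B}_n^{j-1}f)(i/n)$ is positive, but the alternating signs in $\delta_i^{(k)}$ prevent a termwise conclusion. To organize them, set $\bm{u}_m=\big(\Delta(\mathbb{B}_n^{m}f)(i/n)\big)_{i=0}^{n-1}$. Integrating the classical identity $(\mathbb{B}_n g)'(s)=n\sum_{\ell=0}^{n-1}\Delta g(\ell/n)\,B_{n-1,\ell}(s)$ over $[i/n,(i+1)/n]$ gives
\[
\Delta(\mathbb{B}_n g)(i/n)=\sum_{\ell=0}^{n-1}\tilde m_{i\ell}\,\Delta g(\ell/n),\qquad
\tilde m_{i\ell}=n\!\int_{i/n}^{(i+1)/n}\!B_{n-1,\ell}(s)\,ds,
\]
so the matrix $\tilde{\bm M}=(\tilde m_{i\ell})$ is nonnegative, has unit row sums, and is in fact strictly positive. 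Consequently $\bm{u}_m=\tilde{\bm M}^{\,m}\bm{u}_0$ with $\bm{u}_0=\big(f((i+1)/n)-f(i/n)\big)_i>0$.

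Collecting these, with $I_n$ the $n\times n$ identity and $\bm{\delta}^{(k)}=(\delta_i^{(k)})$,
\[
\bm{\delta}^{(k)}=\sum_{j=1}^{k}(-1)^{j-1}\binom{k}{j}\bm{u}_{j-1}
=\sum_{m=0}^{k-1}(-1)^{m}\binom{k}{m+1}\tilde{\bm M}^{\,m}\bm{u}_0
=\sum_{j=0}^{k-1}(I_n-\tilde{\bm M})^{j}\bm{u}_0,
\]
the last equality following from the hockey-stick identity $\sum_{j=m}^{k-1}\binom{j}{m}=\binom{k}{m+1}$. The theorem thus reduces to the purely matrix-analytic claim that $\sum_{j=0}^{k-1}(I_n-\tilde{\bm M})^{j}\bm{u}_0$ is entrywise positive for every $k$, where $\tilde{\bm M}$ is a strictly positive stochastic matrix and $\bm{u}_0>\bm 0$. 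Since $\tilde{\bm M}\bm 1=\bm 1$ forces $(I_n-\tilde{\bm M})\bm 1=\bm 0$, writing $\bm{u}_0=u_{\min}\bm 1+\bm r$ with $u_{\min}=\min_i(\bm u_0)_i>0$ and $\bm r\ge\bm 0$ collapses the sum to $u_{\min}\bm 1+\sum_{j=0}^{k-1}(I_n-\tilde{\bm M})^{j}\bm r$, so it is enough to bound the remainder below by $-u_{\min}$ in each coordinate.

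The \emph{main obstacle} is precisely this positivity estimate: $I_n-\tilde{\bm M}$ has negative off-diagonal entries, so the partial sums are not manifestly positive. I would attack it through the Perron--Frobenius structure of $\tilde{\bm M}$: being strictly positive and stochastic, it has $1$ as a simple dominant eigenvalue, a positive stationary left eigenvector $\bm\pi$, and a splitting $\tilde{\bm M}=\bm 1\bm\pi^{\tr}+\bm R$ with $\rho(\bm R)<1$ and $\bm R\bm 1=\bm 0=\bm\pi^{\tr}\bm R$. Then $\tilde{\bm M}^{\,m}\bm u_0=\bar u\,\bm 1+\bm R^{m}\bm u_0$ with $\bar u=\bm\pi^{\tr}\bm u_0>0$, the $\bm 1$-component drops out of $(I_n-\tilde{\bm M})^{j}$ for $j\ge1$, and the remainder is governed by $\sum_{j}(I_n-\bm R)^{j}$ acting on the complement of $\bm 1$, where $\bm R$ contracts geometrically. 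Making the geometric decay of $\|\bm R^{m}\bm u_0\|$ control the accumulated remainder so that every partial sum stays above zero is the delicate crux of the argument; once it is in place, $\delta_i^{(k)}>0$ and hence the strict monotonicity of $\mathbb{B}_n^{(k)}f$ follow, with the remaining steps being routine bookkeeping.
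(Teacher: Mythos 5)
Your reduction is algebraically sound as far as it goes: the use of Theorem \ref{thm: derivatives of iBernstein poly} with $r=1$, the stochastic matrix $\tilde{\bm M}$ with $\tilde m_{i\ell}=n\int_{i/n}^{(i+1)/n}B_{n-1,\ell}(s)\,ds$ and $\bm u_m=\tilde{\bm M}^{\,m}\bm u_0$, and the hockey-stick conversion $\bm\delta^{(k)}=\sum_{j=0}^{k-1}(I_n-\tilde{\bm M})^{j}\bm u_0$ all check out. But the proof is not complete: you explicitly defer the one step that carries all the content, the entrywise positivity of $\sum_{j=0}^{k-1}(I_n-\tilde{\bm M})^{j}\bm u_0$ for every entrywise positive $\bm u_0$. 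That claim is \emph{false}, and no Perron--Frobenius refinement can rescue it, because strict positivity of $\bm u_0$ carries no quantitative information --- $u_{\min}$ can be arbitrarily small relative to the oscillation of $\bm u_0$. Concretely, take the function in the paper's own Remark following this theorem ($f(x)=x$ on $[0,1/3)$, $f=1/3$ on $[1/3,2/3)$, $f(x)=x-1/3$ on $[2/3,1]$), for which, per that Remark, $\frac{d}{dt}\mathbb{B}_n^{(2)}f(x)<0$ on a neighborhood of $x=1/2$, hence $\le -c<0$ on a compact subinterval. Since $\mathbb{B}_n^{(2)}$ is linear and preserves linear functions, $f_\eps(t)=f(t)+\eps t$ is \emph{strictly} increasing for every $\eps>0$, yet $\frac{d}{dt}\mathbb{B}_n^{(2)}f_\eps(x)=\frac{d}{dt}\mathbb{B}_n^{(2)}f(x)+\eps<0$ near $x=1/2$ once $\eps<c$. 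In your notation this is an entrywise positive $\bm u_0$ (all entries at least $\eps/n$) for which $\bm\delta^{(2)}$ has negative entries and the derivative itself is negative on an interval. So the ``delicate crux'' is not merely delicate: the inequality you need, and indeed the theorem in the generality stated, fails; what can survive is a version with a quantitative non-flatness hypothesis (a lower bound on $u_{\min}$ against the accumulated remainder $\sum_{j\ge1}\|(I_n-\tilde{\bm M})^{j}\bm r\|_\infty$ in your decomposition), which is exactly what the paper's preceding sentence ``if it is not too flat anywhere'' gestures at.

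For comparison, the paper's own proof is a two-line induction that observes only that the powers $\mathbb{B}_n^{k}f$ are strictly increasing (immediate, since $\mathbb{B}_n$ preserves monotonicity); it never addresses the alternating-sign combination $\sum_{j=1}^{k}(-1)^{j-1}\binom{k}{j}\mathbb{B}_n^{j}f$, so it contains the same gap that your proposal makes explicit --- your matrix framework is in fact the right vehicle for a corrected statement, since it reduces monotonicity preservation to a checkable condition $u_{\min}\bm 1+\sum_{j=1}^{k-1}(I_n-\tilde{\bm M})^{j}\bm u_0\ge\bm 0$ on $f$ rather than asserting it for all strictly increasing $f$. As a proof of the theorem as stated, however, your attempt is incomplete, and the missing step cannot be filled.
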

\begin{proof}
The theorem is true for $k=1$ even if $f$ is increasing
(decreasing), but not strictly, on $[0,1]$. It suffices to prove the
theorem when $f$ is strictly increasing on $[0,1]$.  Assume that the
theorem is true for some $k\ge1$. Since $f$ is strictly increasing
on $[0,1]$, $\mathbb{B}_n^k f(t)$ are also strictly increasing
 on $[0,1]$ for all $k\ge1$.
\end{proof}
\begin{rem}
If $k=1$, the condition of strict monotonicity is not necessary.
However, if $k>1$, the condition of strict monotonicity can be
relaxed. For example, $f(x)=x$, if $0\le x< 1/3$, $=1/3$, if $1/3\le
x <2/3$, and $=x-1/3$, if $2/3\le x\le 1$. It can be shown that
$\frac{d}{dt}\mathbb{B}_{n}^{(2)}f(x)<0$ for $x$ in a neighborhood
of $x=1/2$.
\end{rem}
 \subsection{The Integrals of $\mathbb{B}_{n}^{(k)}f(t)$}
The following theorem is very useful for implementing the iterative algorithm in computer languages.
\begin{theorem} \label{thm: integrals of iBernstein poly}  Suppose $f$ is continuous on $[0, 1]$.
For $1\le k\le \infty$ and $x\in [0, 1]$, we have
\begin{equation}\label{eq: integral of iBernstein polynomials from 0 to x}
\int_0^x\mathbb{B}_{n}^{(k)}f(t)dt  = \sum_{i=0}^{n}
f_{ni}^{(k)}S_{ni}(x)=\bm{F}_n^{(k)} \bm{S}_n(x),
\end{equation}
where $\bm{S}_n(x)=\{S_{n0}(x),\ldots,S_{nn}(x)\}^\tr$ and
$$S_{ni}(x)=\int_0^x B_{ni}(t)dt=\sfrac(1,n+1)\int_0^x \beta_{ni}(t)dt, \quad S_{ni}(1)=\sfrac(1,n+1).$$
\end{theorem}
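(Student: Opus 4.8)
The plan is to reduce the whole statement to linearity of the integral, using the fact that for each fixed $k$ the iterated polynomial $\mathbb{B}_{n}^{(k)}f$ is a \emph{finite} linear combination of the base polynomials $B_{ni}$ whose coefficients do not depend on $t$. Recall from the discussion preceding Theorem~\ref{thm: formula for matrix Fn} that
\[
\mathbb{B}_{n}^{(k)}f(t)=\bm{F}_n^{(k)}\bm{B}_n(t)=\sum_{i=0}^{n}f_{ni}^{(k)}B_{ni}(t),
\]
where the entries $f_{ni}^{(k)}$ of $\bm{F}_n^{(k)}$ are constants (in $t$). This representation is the only structural input needed.

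For finite $k$ I would simply integrate term by term. Since each $B_{ni}$ is continuous and the sum is finite, linearity of the integral gives
\[
\int_0^x\mathbb{B}_{n}^{(k)}f(t)\,dt=\sum_{i=0}^{n}f_{ni}^{(k)}\int_0^x B_{ni}(t)\,dt=\sum_{i=0}^{n}f_{ni}^{(k)}S_{ni}(x)=\bm{F}_n^{(k)}\bm{S}_n(x),
\]
which is precisely the asserted identity. The evaluation $S_{ni}(1)=1/(n+1)$ then follows from $\int_0^1 B_{ni}(t)\,dt=1/(n+1)$, equivalently from the fact noted in Section~2 that $\beta_{ni}=(n+1)B_{ni}$ is the density of $beta(i+1,n+1-i)$ and hence integrates to $1$ on $[0,1]$.

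The only point requiring care is the case $k=\infty$, where the limit must be pushed through the integral. Here I would invoke Theorem~\ref{thm: formula for matrix Fn when k=infty}, which guarantees that $\bm{F}_n^{(\infty)}=\lim_{k\to\infty}\bm{F}_n^{(k)}$ exists, i.e.\ $f_{ni}^{(k)}\to f_{ni}^{(\infty)}$ for each $i$. Because there are only $n+1$ base polynomials and each $B_{ni}$ is bounded by $1$ on $[0,1]$, coefficientwise convergence forces $\mathbb{B}_{n}^{(k)}f\to\mathbb{B}_{n}^{(\infty)}f$ uniformly on $[0,1]$; uniform convergence on the compact interval $[0,x]$ then permits interchanging the limit and the integral. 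Combined with the finite-$k$ identity this yields
\[
\int_0^x\mathbb{B}_{n}^{(\infty)}f(t)\,dt=\lim_{k\to\infty}\int_0^x\mathbb{B}_{n}^{(k)}f(t)\,dt=\lim_{k\to\infty}\bm{F}_n^{(k)}\bm{S}_n(x)=\bm{F}_n^{(\infty)}\bm{S}_n(x),
\]
the last equality being continuity of the finite-dimensional map $\bm{v}\mapsto\bm{v}\,\bm{S}_n(x)$.

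I do not expect any genuine obstacle. The finite-$k$ claim is immediate from linearity of integration, and the $k=\infty$ claim is a clean limit interchange rendered trivial by the finite-dimensionality of the coefficient vector. The one step worth stating explicitly is the uniform-convergence justification for that interchange, so that the extension to $k=\infty$ is rigorous rather than merely formal.
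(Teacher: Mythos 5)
Your proof is correct and takes essentially the same route as the paper: the paper's proof simply cites Theorems \ref{thm: formula for matrix  Fn} and \ref{thm: formula for matrix  Fn when k=infty} for the coefficient representation $\mathbb{B}_{n}^{(k)}f(t)=\bm{F}_n^{(k)}\bm{B}_n(t)$ and integrates the finite Bernstein-basis expansion term by term, exactly as you spell out. Your uniform-convergence interchange for $k=\infty$ is careful but not strictly needed, since the paper defines $\mathbb{B}_{n}^{(\infty)}f$ directly as the polynomial $\bm{F}_n^{(\infty)}\bm{B}_n(t)$, to which the finite-sum integration argument applies verbatim.
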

\begin{corollary} \label{corollary: integrals approximation based on iBernstein poly}
If $g$ is continuous on $[a, b]$, $a<b$, then for $1\le k\le
\infty$,
\begin{equation}\label{eq: integrals approximation based on iBernstein polynomials from a to b}
\int_a^b g(t)dt\approx  \frac{1}{n+1}\sum_{i=0}^{n}f_{ni}^{(k)}=\frac{1}{n+1}\bm{F}_n^{(k)} \bm{1}_{n+1},
\end{equation}
where $\bm{F}_n^{(k)}$ is calculated based on
 $f(t)=\sfrac(1,b-a) g[a+(b-a)t] $.
\end{corollary}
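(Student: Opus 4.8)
The plan is to read off the corollary from Theorem~\ref{thm: integrals of iBernstein poly} evaluated at the right endpoint $x=1$, after transporting the integral over $[a,b]$ to one over $[0,1]$ by an affine change of variables. The corollary is an approximation rather than an exact identity, so the real content splits into two pieces: an \emph{exact} evaluation of $\int_0^1\mathbb{B}_n^{(k)}f$, and the \emph{approximation} $\mathbb{B}_n^{(k)}f\approx f$ that is the whole point of the iterated \bernstein construction.

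First I would substitute $u=a+(b-a)t$, which maps $t\in[0,1]$ bijectively onto $u\in[a,b]$ with $du=(b-a)\,dt$. Choosing $f$ to be the corresponding rescaling of $g$ on $[0,1]$, this substitution converts $\int_a^b g(u)\,du$ into $\int_0^1 f(t)\,dt$; and since $g$ is continuous on $[a,b]$, the function $f$ is continuous on $[0,1]$, so Theorem~\ref{thm: integrals of iBernstein poly} applies to $f$ for every $1\le k\le\infty$.

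Next I would specialize that theorem to $x=1$. Using $S_{ni}(1)=\int_0^1 B_{ni}(t)\,dt=\tfrac{1}{n+1}$, which is already recorded in the statement of the theorem, I obtain for every $1\le k\le\infty$ the exact identity
$$\int_0^1\mathbb{B}_n^{(k)}f(t)\,dt=\sum_{i=0}^{n}f_{ni}^{(k)}S_{ni}(1)=\frac{1}{n+1}\sum_{i=0}^{n}f_{ni}^{(k)}=\frac{1}{n+1}\bm{F}_n^{(k)}\bm{1}_{n+1}.$$
Thus the right-hand side of the corollary is precisely the integral over $[0,1]$ of the iterated \bernstein polynomial of $f$. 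To finish, I would replace $\mathbb{B}_n^{(k)}f$ by $f$ under the integral: for finite $k$, $\mathbb{B}_n^{(k)}f\to f$ uniformly on $[0,1]$ as $n\to\infty$ by Theorem~\ref{thm: convergence of derivatives of iBernstein polynomials}(i) with $r=0$, whence $\int_0^1\mathbb{B}_n^{(k)}f\,dt\to\int_0^1 f\,dt$; the case $k=\infty$ uses instead that $\mathbb{B}_n^{(\infty)}f$ is the limiting ``optimal'' approximant of $f$. Undoing the change of variables then yields $\int_a^b g(t)\,dt\approx\tfrac{1}{n+1}\bm{F}_n^{(k)}\bm{1}_{n+1}$.

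The step I expect to require the most care is pinning down the exact scaling constant in the definition of $f$, so that the affine substitution produces $\int_0^1 f=\int_a^b g$ with no stray factor of $b-a$ surviving in the final formula; this normalization of $f$ in terms of $g$ is the only delicate arithmetic point. The other thing worth stating explicitly is the meaning of ``$\approx$'': since the displayed matrix identity is exact, the entire approximation error of the corollary is inherited from that of $\mathbb{B}_n^{(k)}f$ as an approximant of $f$, and is therefore controlled quantitatively by the rate estimates of Theorem~\ref{thm: rate of convergence of the k-th Bernstein poly approx}.
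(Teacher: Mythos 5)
Your route is the paper's own: the corollary is presented there as an immediate consequence of Theorem \ref{thm: integrals of iBernstein poly} (whose proof the paper in turn reduces to the matrix formulas of Theorems \ref{thm: formula for matrix  Fn} and \ref{thm: formula for matrix  Fn when k=infty}), and your chain --- set $x=1$, use $S_{ni}(1)=\frac{1}{n+1}$ to get the exact identity $\int_0^1\mathbb{B}_{n}^{(k)}f(t)\,dt=\frac{1}{n+1}\bm{F}_n^{(k)}\bm{1}_{n+1}$, then invoke $\mathbb{B}_{n}^{(k)}f\approx f$ and undo the affine substitution --- is exactly the intended argument, so the proposal is correct. The one substantive point concerns the scaling constant you rightly single out as delicate: demanding $\int_0^1 f(t)\,dt=\int_a^b g(u)\,du$ under $u=a+(b-a)t$ forces $f(t)=(b-a)\,g[a+(b-a)t]$, whereas the corollary as printed takes $f(t)=\frac{1}{b-a}\,g[a+(b-a)t]$, under which the right-hand side approximates $\frac{1}{(b-a)^2}\int_a^b g(u)\,du$ rather than $\int_a^b g(u)\,du$; the printed constant is a slip (harmless in the paper's numerical examples, where $b-a=1$), and the normalization your proof requires is the correct one.
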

\begin{rem}
Note that numerical integration (\ref{eq: integrals approximation
based on iBernstein polynomials from a to b}) does not involve any
integrals. It contains only algebraic calculations. See Example
\ref{example 9} of \S \ref{sect: numerical example} for some
numerical examples.
\end{rem}
\begin{proof} The theorem follows immediately from Theorems \ref{thm: formula for matrix  Fn}
and \ref{thm: formula for matrix  Fn when k=infty}.
\end{proof} The
following theorem follows immediately from Theorems \ref{thm: rate
of convergence of the k-th Bernstein poly approx} and \ref{thm:
integrals of iBernstein poly}.
\begin{theorem} Under the condition of Theorem \ref{thm: rate of convergence of the k-th Bernstein poly approx}, for
any $x\in [0, 1]$
\begin{equation}\label{ineq: error estimate of integral approximation}
    \left|\int_0^x\mathbb{B}_{n}^{(k)}f(t)dt -\int_0^x f(t)dt\right|
\le C_{kr}''n^{-\frac{d_{kr}}{2}}\omega_{d_{kr}}(n^{-1/2}),
\end{equation}
where $C_{kr}''$ is a constant depending on  $r$ and $k$ only.
\end{theorem}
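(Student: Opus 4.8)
The plan is to reduce the integral estimate directly to the pointwise error bound of Theorem~\ref{thm: rate of convergence of the k-th Bernstein poly approx}, exploiting the fact that that bound is uniform in $t$. First I would combine the two integrals and pull the absolute value inside, writing
\[
\left|\int_0^x\mathbb{B}_{n}^{(k)}f(t)\,dt -\int_0^x f(t)\,dt\right|
=\left|\int_0^x \Err\{\mathbb{B}_{n}^{(k)}f(t)\}\,dt\right|
\le \int_0^x \left|\Err\{\mathbb{B}_{n}^{(k)}f(t)\}\right|\,dt,
\]
which is justified by linearity of the integral together with the triangle inequality for integrals; both are legitimate because $f$, and hence the polynomial $\mathbb{B}_{n}^{(k)}f$, is continuous on $[0,1]$.

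The key step is to insert the pointwise bound. Under the hypothesis $f\in C_{d_{kr}}[0,1]$ with $d_{kr}=2(k-1)+r$ and $r=0,1$, Theorem~\ref{thm: rate of convergence of the k-th Bernstein poly approx} supplies
\[
\left|\Err\{\mathbb{B}_{n}^{(k)}f(t)\}\right|
\le C_{kr}''\,n^{-d_{kr}/2}\,\omega_{d_{kr}}(n^{-1/2})
\]
for every $t\in[0,1]$, and the right-hand side is a constant that does not depend on $t$. Substituting this into the last integral leaves a constant integrand, so the integral over $[0,x]$ equals that constant multiplied by $x$. Since $0\le x\le 1$, the factor $x$ is at most $1$ and may be discarded, which yields the asserted inequality with the very same constant $C_{kr}''$.

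There is essentially no genuine obstacle here; the whole argument rests on the uniformity in $t$ of the estimate from Theorem~\ref{thm: rate of convergence of the k-th Bernstein poly approx}. The one point deserving a moment's attention is that the modulus-of-continuity factor $\omega_{d_{kr}}(n^{-1/2})$ carries no $t$-dependence, so it passes through the integration untouched rather than having to be controlled pointwise under the integral sign; this is exactly why the integral error inherits the same rate $n^{-d_{kr}/2}\omega_{d_{kr}}(n^{-1/2})$ as the pointwise error. Although the statement is phrased as following from Theorem~\ref{thm: integrals of iBernstein poly} as well, the explicit representation $\int_0^x\mathbb{B}_{n}^{(k)}f(t)\,dt=\bm{F}_n^{(k)}\bm{S}_n(x)$ given there is needed only to compute the approximating integral, not to establish the bound, which requires the pointwise estimate alone.
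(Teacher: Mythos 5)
Your proof is correct and is essentially the paper's own argument: the paper offers no details beyond saying the result ``follows immediately'' from the pointwise rate theorem and the integral formula, and your computation --- pulling the absolute value inside the integral, inserting the uniform pointwise bound, and discarding the factor $x\le 1$ --- is exactly that immediate deduction. Your closing observation is also accurate: the representation $\int_0^x\mathbb{B}_{n}^{(k)}f(t)\,dt=\bm{F}_n^{(k)}\bm{S}_n(x)$ is needed only to compute the approximating integral in practice, not to establish the error bound, which rests on the pointwise estimate alone.
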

\section{Iterated Szasz  Approximation and Iterated $q$-\bernstein Polynomial}
The idea used to construct the iterated \bernstein polynomial approximation is simple and very effective. The same idea seems also applicable to  other operators or approximations such as the Szasz  operator \citep{Otto-Szasz-1950} [or the Szasz-Mirakyan (Mirakja) operator]  and the $q$-\bernstein polynomial with $q>1$. We will give some numerical examples in \S \ref{sect: numerical example} and the analogues of results of Section \ref{sect: iterated Bernstein poly and rate of convergence} could be be obtained by using the analogue results about the rate of convergence of the  Szasz-Mirakyan approximation \citep{Totik-AJM-1994}.
We hope these would inspire more investigations with rigorous mathematics.
\subsection{Iterated Szasz  Approximation}
The so-called Szasz-Mirakyan approximation is defined as
\begin{equation}\label{Szasz-Mirakyan operator}
    \mathbb{S}_nf(x)=\sum_{i=0}^\infty f\big(\sfrac({i},{n})\big) P_{ni}(x),\quad x\in [0, \infty),
\end{equation}
where $f$ is defined on $[0, \infty)$ and $P_{ni}(x)=e^{-nx} {(nx)^i}/{i!}$.
Note that, for $x>0$, $P_{ni}(x)$ is the probability that $V_n(x)=i$  where $V_n(x)$ is the Poisson random variable  with mean $nx$. Since the   binomial probability $B_{ni}(t)$ can be approximated by  $P_{ni}(t)$ for large $n$, the
 Szasz-Mirakyan approximation can be viewed as an extension of the \bernstein polynomial approximation. The error of $\mathbb{S}_nf$ as an approximation of $f$ is
\begin{equation}\label{Error of Szasz-Mirakyan operator}
    \Err(\mathbb{S}_nf)(x)=\mathbb{S}_nf(x)-f(x)=\sum_{i=0}^\infty f\big(\sfrac({i},{n})\big) P_{ni}(x)-f(x),\quad x\in [0, \infty).
\end{equation}
Applying the Szasz-Mirakyan operator to $\Err(\mathbb{S}_nf)(x)$, we have
\begin{equation}\label{Approx of Error of Szasz-Mirakyan operator}
    \mathbb{S}_n\{\Err(\mathbb{S}_nf)\}(x)=\mathbb{S}_n^2f(x)-\mathbb{S}_nf(x)=\sum_{i=0}^\infty f\big(\sfrac({i},{n})\big) \mathbb{S}_n P_{ni}(x)-\mathbb{S}_nf(x),\quad x\in [0, \infty).
\end{equation}
So we can define the second Szasz-Mirakyan approximation as
\begin{equation}\label{the 2nd Szasz-Mirakyan operator}
    \mathbb{S}_n^{(2)}f(x)=\mathbb{S}_nf(x)-\mathbb{S}_n\{\Err(\mathbb{S}_nf)\}(x),\quad x\in [0, \infty).
\end{equation}
\begin{theorem}
\begin{align}
\label{The k-th-Szasz-Mirakyan approximation}
    \mathbb{S}_{n}^{(k)}f(x)
   & =\sum_{i=0}^{\infty}
f\big(\sfrac({i},{n})\big)\sum_{j=1}^k{k\choose j}(-1)^{j-1}\mathbb{S}_{n}^{j-1}P_{ni}(x) ,\quad k\ge 1,\quad x\in [0, \infty).
\end{align}
Clearly,
for every $k\ge 1$, $\mathbb{S}_{n}^{(k)}$ preserves linear functions and therefore
\begin{equation}\label{error of k-th Szasz-Mirakyan approximation}
\Err\{\mathbb{S}_{n}^{(k)}f(x)\}=\sum_{i=0}^{\infty}
\left\{f\big(\sfrac({i},{n})\big)-f(x)\right\}\sum_{j=1}^k{k\choose j}(-1)^{j-1}\mathbb{S}_{n}^{j-1}P_{ni}(t)  ,\quad k\ge 1;\, x\in [0, \infty).
\end{equation}
\end{theorem}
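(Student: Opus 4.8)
The plan is to mirror, step for step, the argument already used for the iterated Bernstein operator, exploiting that $\mathbb{S}_n$ is linear and reproduces linear functions, and then to pay careful attention to the one genuinely new feature, namely that every sum over $i$ now runs to infinity. First I would record the inductive definition $\mathbb{S}_n^{(k+1)}f(x)=\mathbb{S}_n^{(k)}f(x)-\mathbb{S}_n\{\mathbb{S}_n^{(k)}f-f\}(x)$, the exact analogue of (\ref{eq: inductive formula of iterated bernstein polynomial}), which for $k=1$ reduces to the definition (\ref{the 2nd Szasz-Mirakyan operator}). Since the derivation of (\ref{The k-th-n-th order bernstein approximation}) in the first Lemma used nothing beyond linearity of the operator, the identical induction on $k$ yields $\mathbb{S}_n^{(k)}f(x)=\sum_{j=1}^k{k\choose j}(-1)^{j-1}\mathbb{S}_n^j f(x)$.

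Next, linearity of $\mathbb{S}_n$ gives the Szász analogue of (\ref{k-th Bernstein poly of order n}), namely $\mathbb{S}_n^j f(x)=\sum_{i=0}^\infty f(i/n)\,\mathbb{S}_n^{j-1}P_{ni}(x)$. Substituting this into the binomial expansion and interchanging the finite sum over $j$ with the infinite sum over $i$ produces (\ref{The k-th-Szasz-Mirakyan approximation}). For preservation of linear functions I would check the two base cases directly: $\mathbb{S}_n(1)(x)=\sum_i P_{ni}(x)=1$ and $\mathbb{S}_n(t)(x)=\sum_i (i/n)P_{ni}(x)=E\{V_n(x)/n\}=x$, because $V_n(x)$ is Poisson with mean $nx$; hence $\mathbb{S}_n$ fixes every linear function, and since $\sum_{j=1}^k{k\choose j}(-1)^{j-1}=1$ the expansion shows $\mathbb{S}_n^{(k)}$ does as well. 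Finally the error formula (\ref{error of k-th Szasz-Mirakyan approximation}) follows exactly as (\ref{error of k-th bernstein poly-3}) did: applying $\mathbb{S}_n^{j-1}$ termwise to $\sum_i P_{ni}=1$ gives $\sum_{i=0}^\infty\sum_{j=1}^k{k\choose j}(-1)^{j-1}\mathbb{S}_n^{j-1}P_{ni}(x)=1$, so I may write $f(x)$ as $f(x)$ times this unit sum and subtract it termwise from (\ref{The k-th-Szasz-Mirakyan approximation}).

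The step I expect to be the main obstacle is the one that is invisible in the Bernstein case: justifying the series manipulations. Because the index $i$ now ranges over all of $\mathbb{N}$, I must ensure that $\mathbb{S}_n f$ is well defined in the first place (which already requires a growth restriction on $f$, since the Poisson weights decay only like $(nx)^i/i!$), that $\mathbb{S}_n$ may be applied term by term to the series $\sum_i P_{ni}=1$ and to $\sum_i f(i/n)P_{ni}$, and that the order of the $i$-sum and the $j$-sum may be exchanged. All of these rest on absolute convergence, so I would invoke Tonelli's theorem after suitably bounding $\mathbb{S}_n^{j-1}P_{ni}(x)$ and $|f(i/n)|$; under the standard assumption that $f$ is continuous with at most polynomial growth these bounds hold and the interchanges are legitimate. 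Once this point is settled, the remaining algebra is word-for-word identical to the polynomial case.
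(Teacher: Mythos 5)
Your proposal is correct, but there is a peculiarity you could not have known: the paper offers \emph{no} proof of this theorem at all. It is stated bare, and the surrounding text only remarks that the analogues of the results of Section \ref{sect: iterated Bernstein poly and rate of convergence} ``could be obtained'' and that the author hopes the examples ``would inspire more investigations with rigorous mathematics.'' So your write-up supplies the missing argument, and your instinct to transplant the Bernstein-case proof is exactly the intended route: the inductive definition $\mathbb{S}_n^{(k+1)}f=\mathbb{S}_n^{(k)}f-\mathbb{S}_n\{\mathbb{S}_n^{(k)}f-f\}$, the identity $\mathbb{S}_n^{(k)}f=\sum_{j=1}^k\binom{k}{j}(-1)^{j-1}\mathbb{S}_n^{j}f$ proved by the same induction (which uses only linearity of the operator), the expansion $\mathbb{S}_n^{j}f(x)=\sum_{i=0}^{\infty}f(i/n)\,\mathbb{S}_n^{j-1}P_{ni}(x)$, and the error formula obtained from $\sum_{i}P_{ni}(x)=1$, $\sum_{i}(i/n)P_{ni}(x)=x$ (the Poisson mean), and $\sum_{j=1}^k\binom{k}{j}(-1)^{j-1}=1$.

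What your proof adds beyond the paper is precisely the point the paper passes over in silence, and it is a genuine one: since the index $i$ runs to infinity, (i) $\mathbb{S}_nf$ is not even defined without a growth restriction on $f$ (the theorem as printed carries no hypothesis whatsoever), (ii) applying $\mathbb{S}_n$ term by term to an infinite series, and (iii) exchanging the $i$-sum with the $j$-sum, all require justification. Your plan --- bound $|f(i/n)|$ under polynomial growth, use positivity of the iterates $\mathbb{S}_n^{j-1}P_{ni}\ge 0$, and invoke Tonelli --- is the right repair; just phrase it as Tonelli applied to the nonnegative double array to get absolute convergence, followed by Fubini for the signed interchange. With that small rewording, your proof is strictly more rigorous than anything in the paper, which treats the statement as a formal analogue of the Bernstein theorem. (Incidentally, the paper's display (\ref{error of k-th Szasz-Mirakyan approximation}) writes $P_{ni}(t)$ where it must mean $P_{ni}(x)$; your version silently corrects this.)
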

Figure \ref{fig: error-Iterated-Szasz-Approx-dchisq-4} gives an example of the iterated Szasz approximations.
\subsection{Iterated $q$-\bernstein Polynomial}\label{sect: q-Bernstein polynomials}
Let $x$ be a real number. For any $q>0$, define the $q$-number
$$[x]_q=\left\{
          \begin{array}{ll}
            \frac{1-q^x}{1-q}, & \hbox{if $q\ne 1$;} \\
            x, & \hbox{if $q=1$.}
          \end{array}
        \right.$$
If $x$ is integer, then $[x]_q$ is called a $q$-integer. For $q\ne 1$, the $q$-binomial coefficient (Gaussian binomial) is defined by
$${n\choose r}_{\!q}=
\left\{
  \begin{array}{ll}
    1, & \hbox{$r=0$;} \\
    \frac{(1-q^n)(1-q^{n-1})\cdots (1-q^{n-r+1})}
{(1-q^r)(1-q^{r-1})\cdots (1-q)}, & \hbox{$1\le r\le n$;} \\
    0, & \hbox{$r>n$.}
  \end{array}
\right.
 $$
So
$${n\choose r}_{\!q}=\prod_{i=0}^{r-1}\left[\frac{n-i}{r-i}\right]_{q^{r-i}},\quad 0\le r\le n, \quad q>0,$$
where empty product is defined to be 1. Thus the ordinary binomial coefficient ${n\choose r}$ is the special case when $q=1$. G. M. Phillips \cite{Phillips-G-M-1997-q-Bernstein} introduced the $q$-\bernstein polynomial of order $n$ for any continuous function $f(t)$ on the interval $[0, 1]$
$$ \mathbb{Q}_{nq}f(t)=\sum_{i=0}^n f\left(t_{i}^{(q)}\right)Q_{ni}(t),\quad n=1,2,\ldots,$$
where
$$t_{i}^{(q)}=\frac{[i]_q}{[n]_q},\quad Q_{ni}(t)={n\choose i}_{\!q}
t^i\prod_{j=1}^{n-i}(1-tq^{j-1}),\quad i=0, 1, \ldots,n.$$
Clearly, $\mathbb{B}_nf(t)=\mathbb{Q}_{n1}f(t)$ which is the classical \bernstein polynomial of order $n$. It has been proved that if $0<q<1$ then $\mathbb{Q}_{nq}f(t)$ does not approximate $f$  and that if $q>1$  and $f(z)$ is analytic complex function on disk $\{z\,:\, |z|<r\}$, $r>q$, then $\mathbb{Q}_{nq}f(t)$ has better rate of convergence, $\mathcal{O}(q^{-n})$, than the best rate of convergence, $\mathcal{O}(n^{-1})$, of  $\mathbb{B}_nf(t)$ \citep[see][for example]{Ostrovska-2003-JAT,Wang-Wu-2008-JMAA-Saturate-Conv-q-Bernstein}.

Note that if $q>1$ then points $t_{i}^{(q)}= {[i]_q}/{[n]_q}$ are no longer uniform partition points of the interval $[0, 1]$. For fixed $n$, $\lim_{q\to\infty}t_{i}^{(q)}=0$, $i<n$.  So all $t_i$ except $t_{n}^{(q)}=1$ are attracted toward 0 as $q$ getting large.
However, interestingly, the larger the $q$ is in a certain range, the closer the $q$-\bernstein polynomial approximation $\mathbb{Q}_{nq}f(t)$ to $f(t)$. For a given $n$, if $q$ is too large, the $q$-\bernstein polynomial approximation $\mathbb{Q}_{nq}f(t)$ becomes worse in the neighborhood of the right end-point.

Similarly we have the iterated $q$-\bernstein polynomials
\begin{align}
\label{The k-th-q-Bernstein poly}
    \mathbb{Q}_{nq}^{(k)}f(t)
   & =\sum_{i=0}^{\infty}
f\big(t_{i}^{(q)}\big)\sum_{j=1}^k{k\choose j}(-1)^{j-1}\mathbb{Q}_{nq}^{j-1} Q_{ni}(t) ,\quad k\ge 1,\quad t\in [0, 1].
\end{align}
See Figure \ref{fig: error-Iterated-qBernstein-Poly-sin} for an example of the iterated $q$-\bernstein polynomials. Comparing
Figures \ref{fig: error-Iterated-Bernstein-Poly-sin} and \ref{fig: error-Iterated-qBernstein-Poly-sin} we see that increasing $q$ from 1 to 1.1 does improve the approximation on $[0, 1]$ except at points in the neighborhood of the right end-point. The approximation near the right end-point could be worsen by applying the iterated $q$-\bernstein polynomials. The improvement can be achieved by the iterated \bernstein polynomials without messing up the right boundary.

\section{Numerical Examples}\label{sect: numerical example}
In this  section some numerical examples are given with the hope of more
 investigations on the proposed methods with rigorous mathematics.
\begin{example} \label{example 1}
Figure  \ref{fig: error-Iterated-Bernstein-Poly-sin} shows  the
first three iterated \bernstein polynomials of $f(t)=\sin(2\pi t)$
and the errors where $n=30$. The ``optimal'' \bernstein polynomial
approximation is also plotted which seems to have almost no error.
\end{example}
\begin{example} \label{example 2}
 Figure  \ref{fig: error-Iterated-Bernstein-Poly-not-twice-differentiable-function} shows  the first three iterated \bernstein polynomials of $f(t)=\mathrm{sign}(t-0.5)(t-0.5)^2$ (a differentiable but not twice differentiable function) and the errors where $n=30$.
The ``optimal'' \bernstein polynomial approximation is not plotted
which becomes very bad near the two endpoints.
\end{example}
\begin{example} \label{example 3}
Figure  \ref{fig:
error-Iterated-Bernstein-Poly-not-differentiable-function} shows the
first three iterated \bernstein polynomials of $f(t)=|t-0.5|$ and
the errors where $n=30$. The ``optimal'' \bernstein polynomial
approximation is not plotted which becomes very bad near the two
endpoints.
\end{example}
\begin{example} \label{example 4}
 Figure  \ref{fig: error-Iterated-Szasz-Approx-dchisq-4} shows  the first three iterated Szasz  approximation of $f(x)=0.25 x e^{-x/2}$, $x\ge 0$, and the errors where $n=10$.

\end{example}
\begin{example} \label{example 5}
 Figure  \ref{fig: error-Iterated-qBernstein-Poly-sin} shows  the first three iterated $q$-\bernstein  polynomials of $f(x)=\sin(\pi x)$  and the errors where $n=30$, $q=1.1$. The performance of the approximation near $t=1$ is very sensitive to $q$.
\end{example}
\begin{example} \label{example 6}
 Figure  \ref{fig: Derivative-Iterated-Bernstein-Poly-convex-not-differentiable-function} shows  the first three iterated \bernstein  polynomials of the following function $f(t)=|t-0.5|$  and their derivatives where $n=30$.
\end{example}
\begin{example} \label{example 7}
 Figure  \ref{fig: Derivative-Iterated-Bernstein-Poly-convex-not-twice-differentiable-function} shows  the first three iterated \bernstein  polynomials of the following function $f(t)$  and their derivatives where $n=30$,
$$f(t)=
\left\{
          \begin{array}{ll}
            t(t-1), & \hbox{$0\le t<0.5$;}\\
            -\sfrac(1,4)+\sfrac(2,3)(t-0.5)^{3/2}, & \hbox{$0.5\le t\le 1$.}
          \end{array}
        \right.
$$
This a convex function which has continuous first  derivative but does not have a continuous
second derivative.
\end{example}
\begin{example} \label{example 8}
 Denote $t_\delta=\sfrac(2,3)-\delta$ where $\delta$ is a small positive number.
$$f(t)=\left\{
  \begin{array}{ll}
    f_0(t), & \hbox{$0\le t\le t_\delta$;} \\
    p_k(t), & \hbox{$t_\delta< t\le 1$,}
  \end{array}
\right.$$
where $f_0(t)=v-\sqrt{r^2-(t-u)^2}$ is portion of a circle with radius $r$ (a larger positive number) and centered at $(u, v)$, $u, v>0$, $p_k(t)$ is a polynomial of degree $k=3$,
$$p_k(t)=\sum_{i=0}^ka_{ki}t^i=a_{kk}t^k+a_{k,k-1}t^{k-1}+\cdots+a_{k1}t+a_{k0}.$$
If we choose $$v=\frac{-30t_\delta\pm \sqrt{900t_\delta^2-40(25t_\delta^2-r^2)}}{20},\quad u=\sqrt{r^2-v^2}$$
then $f(0)=f_0(0)=0$, $f(t_\delta)=f_0(t_\delta)=-3t_\delta.$ We also have
$$f_0'(t)=\frac{t-u}{\sqrt{r^2-(t-u)^2}},\quad f_0''(t)=\frac{r^2}{\{r^2-(t-u)^2\}^{3/2}}.$$
Choose the coefficients of $p_k$ so that
$f(1)=\sum_{i=0}^ka_{ki}=0$ and
the $j$th ($j=0,1,\ldots,k-1$) derivative at $t_\delta$ satisfy
$$f^{(j)}(t_\delta)=\sum_{i=j}^k\sfrac(i!,{(i-j)!})a_{ki}t_\delta^{i-j}
=f_0^{(j)}(t_\delta^{1-j}).$$
If $r$ is large enough, say $r=70$, $\delta=0.05$, then $f(t)$ is strictly convex and has continuous positive second derivative $f''$, but $\mathbb{B}_n^{(2)}f$ is still not convex because its second derivative is negative at some points near $t=0.4$ (see  Figure \ref{fig: Example-5-strictly-convex-continuous-positive-second-derivative}).
\end{example}

\begin{example}\label{example 9}
In the following Tables \ref{tbl: example 9 integrals n=5} and
\ref{tbl: example 9 integrals n=10} we summarize some the results of
numerical integrals on $[0, 1]$ using our proposed method given in
Corollary \ref{corollary: integrals approximation based on
iBernstein poly} for functions $f(x)=\pi\sin (\pi x)$, $f(x)=e^x$,
and $f(x)=\varphi(x)=(1/\sqrt{2\pi})\exp(-x^2/2)$.
\begin{table}
  \centering
  \caption{Some results of numerical
integrals ($n=5$)}\label{tbl: example 9 integrals n=5}
\begin{tabular}{|c|c|c|c|c|}
  \hline
   & \multicolumn{3}{c|}{$k$}&\\\hline
    & 1 & 5 & $\infty$ & Exact value\\
  \hline
  $\int_0^1\pi\sin (\pi x)dx$ & 1.611471&2.005416 & 1.999203&2 \\
  \hline
  $\int_0^1e^xdx$ & 1.746528&1.718369 & 1.718282&1.718282 \\
  \hline
  $\int_0^1\varphi(x)dx$ & 0.3371903&0.3413510 & 0.3413443&0.3413447\\
  \hline
\end{tabular}

\end{table}
\begin{table}
  \centering
  \caption{Some results of numerical
integrals ($n=10$)}\label{tbl: example 9 integrals n=10}
\begin{tabular}{|c|c|c|c|c|}
  \hline
 & \multicolumn{3}{c|}{$k$}&\\\hline
   & 1 & 5 & $\infty$ & Exact value \\
  \hline
  $\int_0^1\pi\sin (\pi x)dx$ & 1.803203&2.000146 & 2.000000&2 \\
  \hline
  $\int_0^1e^xdx$ & 1.732389&1.718285 &1.718282&1.718282 \\
  \hline
  $\int_0^1\varphi(x)dx$ & 0.3392624&0.341345 & 0.3413447&0.3413447\\
  \hline
\end{tabular}

\end{table}

\end{example}

From these examples and the figures we see that the error is reduced
significantly by using the iterated \bernstein polynomial
approximation without increasing the degree of the polynomial. For
non-smooth function, the maximum error is reduced more than 50\% by
the third \bernstein polynomial. It is also seen from Figure
\ref{fig: error-Iterated-Bernstein-Poly-not-differentiable-function}
that unlike the classical \bernstein polynomial approximation the
iterated \bernstein polynomial approximation $\mathbb{B}_{n}^{(k)}f$
seems not to preserve the convexity of $f$ for $k>1$ in this case
when $f$ is not smooth. So it is necessary for
$\mathbb{B}_{n}^{(k)}f$ to preserve the convexity of $f$   that $f$
is smooth and $f''$ is not too close to zero. For applications in
numerical integrals and computer graphics, sometimes it is even much
more expensive to evaluate the function $f$ than the simple
algebraic calculations. So it is significant to apply the iterated
or the ``optimal'', if $f$ is infinitely differentiable, \bernstein
polynomial approximation.


\begin{thebibliography}{10}

\bibitem{Bernstein}
S.~N. Bern\v{s}te\u{\i}n.
\newblock D\'{e}monstration du th\'{e}or\`{e}me de {W}eierstrass fond\'{e}e sur
  le calcul des probabilities.
\newblock {\em Comm. Soc. Math. Kharkov}, 13:1--2, 1912.

\bibitem{Gal-S-G-book-2008}
Sorin~G. Gal.
\newblock {\em Shape-Preserving Approximation by Real and Complex Polynomials}.
\newblock Birkh\"{a}user, Boston, Basel, Berlin, 2008.

\bibitem{Pena-J-M-book-1999}
Juan~Manuel Pe{\~n}a.
\newblock {\em Shape preserving representations in computer-aided geometric
  design}, volume Volume 385.
\newblock Nova Science Publishers. Inc., 1999.

\bibitem{Butzer-1953}
P.~L. Butzer.
\newblock Linear combinations of {B}ernstein polynomials.
\newblock {\em Canadian J. Math.}, 5:559--567, 1953.

\bibitem{Phillips-G-M-1997-q-Bernstein}
George~M. Phillips.
\newblock On generalized {B}ernstein polynomials.
\newblock In {\em Approximation and optimization, {V}ol.\ {I} ({C}luj-{N}apoca,
  1996)}, pages 335--340. Transilvania, Cluj-Napoca, 1997.

\bibitem{Ostrovska-2003-JAT}
Sofiya Ostrovska.
\newblock {$q$}-{B}ernstein polynomials and their iterates.
\newblock {\em J. Approx. Theory}, 123(2):232--255, 2003.

\bibitem{Wang-Wu-2008-JMAA-Saturate-Conv-q-Bernstein}
Heping Wang and XueZhi Wu.
\newblock Saturation of convergence for {$q$}-{B}ernstein polynomials in the
  case {$q\geq1$}.
\newblock {\em J. Math. Anal. Appl.}, 337(1):744--750, 2008.

\bibitem{Lorentz-1986-book-bernstein-poly}
G.~G. Lorentz.
\newblock {\em Bernstein polynomials}.
\newblock Chelsea Publishing Co., New York, second edition, 1986.

\bibitem{Popoviciu-1935}
T.~Popoviciu.
\newblock Sur l'approximation des fonctions convexes d'ordre sup\'{e}rieur.
\newblock {\em Mathematica (Cluj)}, 10:49--54, 1935.

\bibitem{Dunham-Jackson-1930}
Dunham Jackson.
\newblock {\em The theorey of approximation}, volume~11.
\newblock Amer. Math. Soc. Coll. Publ., 1930.

\bibitem{Voronovskaya-1932}
E.~Voronovskaya.
\newblock D\'{e}termination de la forme asymptotique d'approximation des
  fonctions par les polyn\^{o}mes de {M}. {B}ernstein,.
\newblock {\em Doklady Akademii Nauk SSSR}, pages 79--85, 1932.

\bibitem{Bernstein-1932}
S.~N. Bern\v{s}te\u{\i}n.
\newblock Compl\'{e}tement \`{a} l'article de {E}. {V}oronowskaja.
\newblock {\em C. R. Acad. Sci. U.R.S.S.}, pages 86--92, 1932.

\bibitem{Costabile-etal-BIT-1996}
F.~Costabile, M.~I. Gualtieri, and S.~Serra.
\newblock Asymptotic expansion and extrapolation for {B}ernstein polynomials
  with applications.
\newblock {\em BIT}, 36(4):676--687, 1996.

\bibitem{Frentiu-Lin-Comb-Bernstein-poly-1970}
M.~Frentiu.
\newblock Linear combinations of {B}ern\v ste\u\i n polynomials and of
  {M}irakjan operators.
\newblock {\em Studia Univ. Babe\c s-Bolyai Ser. Math.-Mech.}, 15(1):63--68,
  1970.

\bibitem{May-Canad-J-Math-1976}
C.~P. May.
\newblock Saturation and inverse theorems for combinations of a class of
  exponential-type operators.
\newblock {\em Canad. J. Math.}, 28(6):1224--1250, 1976.

\bibitem{Kelisky-Rivlin-PJM-1967}
R.~P. Kelisky and T.~J. Rivlin.
\newblock Iterates of {B}ernstein polynomials.
\newblock {\em Pacific J. Math.}, 21:511--520, 1967.

\bibitem{Abel-Ivan-Amer-Math-Month-2009}
Ulrich Abel and Mircea Ivan.
\newblock Over-iterates of {B}ernstein's operators: A short and elementary
  proof.
\newblock {\em American Mathematical Monthly}, 116(6):535--538, 2009.

\bibitem{Sahai-2004}
Ashok Sahai.
\newblock An iterative algorithm for improved approximation
by {B}ernstein's operator using statistical perspective.
\newblock {\em Appl. Math. Comput.}, 149(2):327--335, 2004.

\bibitem{Otto-Szasz-1950}
Otto Szasz.
\newblock Generalization of {S}. {B}ernstein's polynomials to infinite
  interval.
\newblock {\em Journal of Research of the National Bureau of Standards},
  45(3):239--244, September 1950.

\bibitem{Totik-AJM-1994}
Vilmos Totik.
\newblock Approximation by {B}ernstein polynomials.
\newblock {\em Amer. J. Math.}, 116(4):995--1018, 1994.

\end{thebibliography}
\def\polhk#1{\setbox0=\hbox{#1}{\ooalign{\hidewidth
  \lower1.5ex\hbox{`}\hidewidth\crcr\unhbox0}}} \def\cprime{$'$}
  \def\lfhook#1{\setbox0=\hbox{#1}{\ooalign{\hidewidth
  \lower1.5ex\hbox{'}\hidewidth\crcr\unhbox0}}} \def\cprime{$'$}

\begin{figure}
\begin{center}
  \includegraphics[width=6.0in]{}
  \caption{The iterated Bernstein polynomials and errors when $f(t)=\sin(2\pi t)$.
  The error is minimized by $\mathbb{B}_{n}^{(\infty)}f$.}\label{fig: error-Iterated-Bernstein-Poly-sin}
\end{center}
\end{figure}

\begin{figure}
\begin{center}
  \includegraphics[width=6.0in]{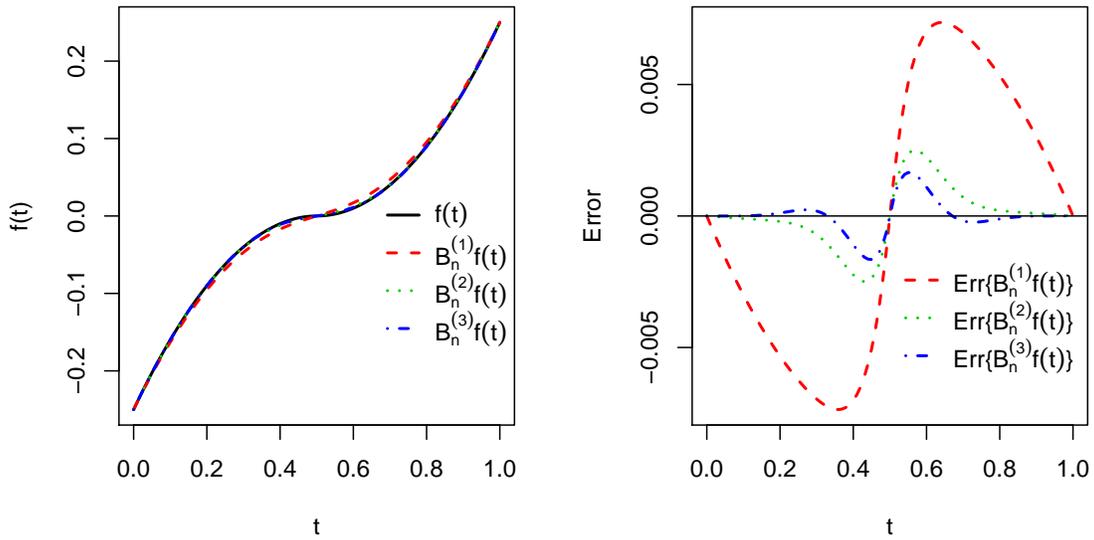}
  \caption{The iterated Bernstein polynomials and errors when $f(t)=\mathrm{sign}(t-0.5)(t-0.5)^2$ which is differentiable on $[0, 1]$ but not twice differentiable at $t=0.5$.}\label{fig: error-Iterated-Bernstein-Poly-not-twice-differentiable-function}
\end{center}
\end{figure}
\begin{figure}
\begin{center}
  \includegraphics[width=6.0in]{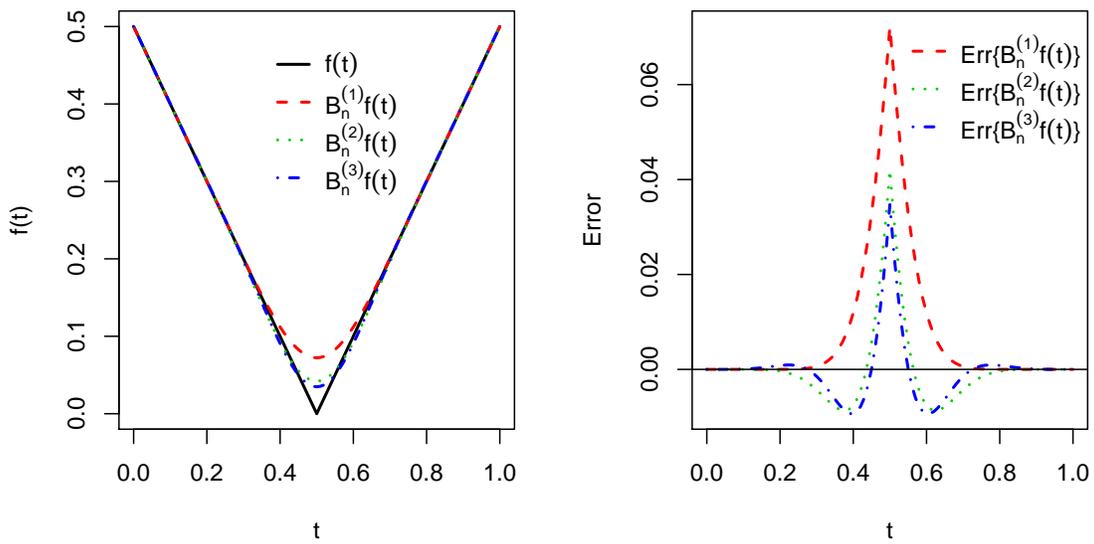}
  \caption{The iterated Bernstein polynomials and errors when $f(t)=|t-0.5|$ which is not differentiable at $t=0.5$.}\label{fig: error-Iterated-Bernstein-Poly-not-differentiable-function}
\end{center}
\end{figure}

\begin{figure}
\begin{center}
  \includegraphics[width=6.0in]{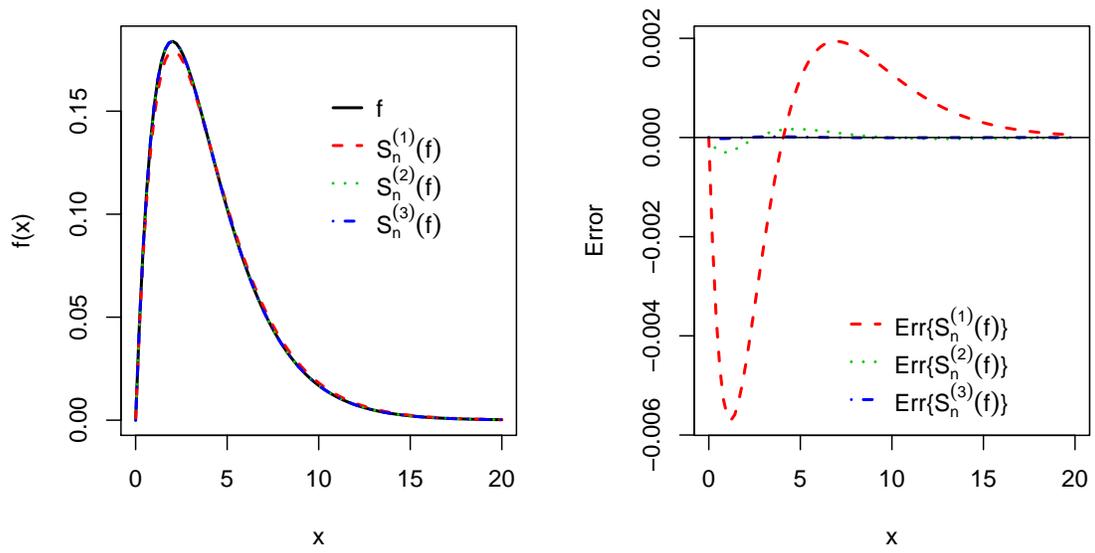}
  \caption{The iterated Szasz approximations and errors when $f(x)=0.25 x e^{-x/2}$, $x\ge 0$  with $n=10$.}\label{fig: error-Iterated-Szasz-Approx-dchisq-4}
\end{center}
\end{figure}

\begin{figure}
\begin{center}
  \includegraphics[width=6.0in]{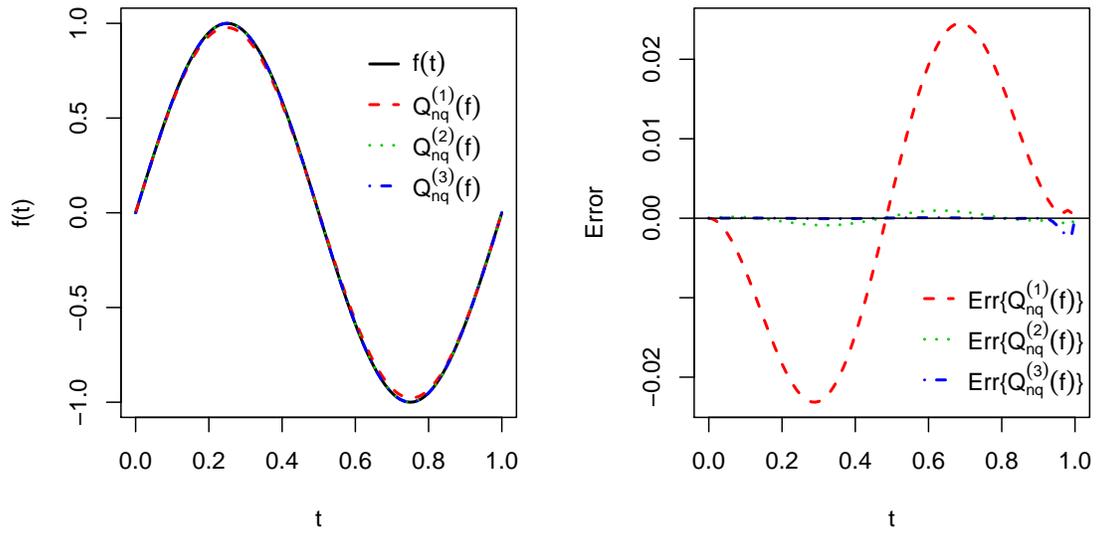}
  \caption{The iterated $q$-Bernstein polynomials and errors when $f(t)=\sin(2\pi t)$ with $n=30$, $q=1.1$.}\label{fig: error-Iterated-qBernstein-Poly-sin}
\end{center}
\end{figure}
\begin{figure}
\begin{center}
  \includegraphics[width=6.0in]{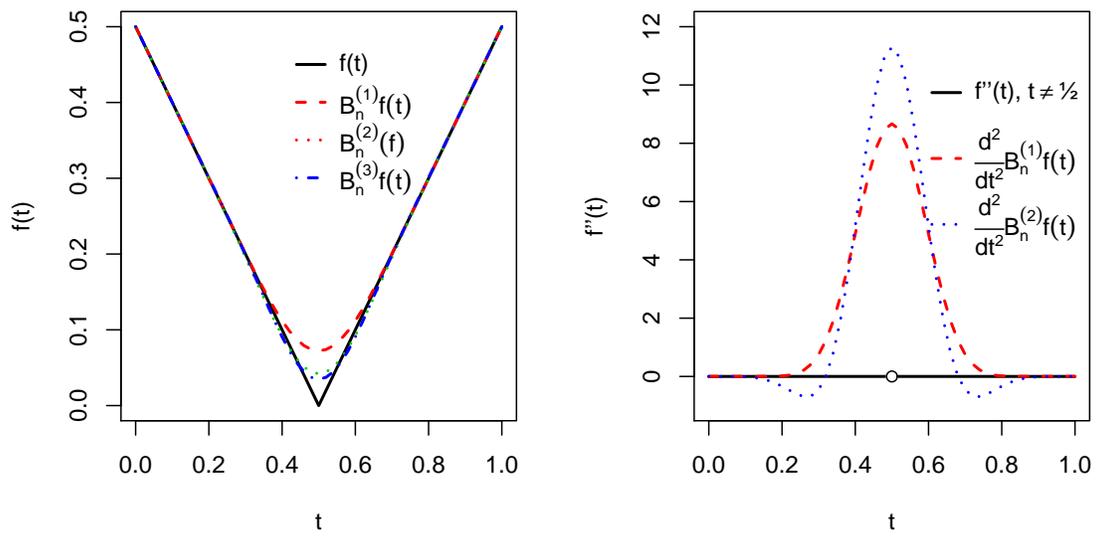}
  \caption{The iterated Bernstein polynomials and their derivatives when $f(t)=|t-0.5|$ which is convex but not differentiable at $t=0.5$.}\label{fig: Derivative-Iterated-Bernstein-Poly-convex-not-differentiable-function}
\end{center}
\end{figure}

\begin{figure}
\begin{center}
  \includegraphics[width=6.0in]{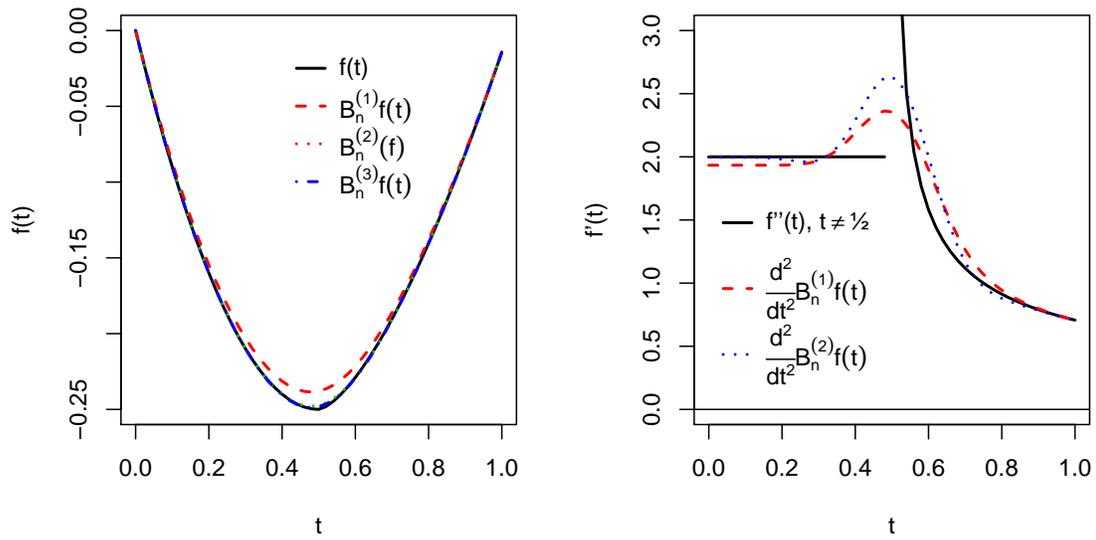}
  \caption{The iterated Bernstein polynomials of $f(t)$ as in Example \ref{example 7} and their derivatives  where $f(t)$  is convex,  differentiable on $[0, 1]$ but not twice differentiable at $t=0.5$.}\label{fig: Derivative-Iterated-Bernstein-Poly-convex-not-twice-differentiable-function}
\end{center}
\end{figure}
\begin{figure}
\begin{center}
  \includegraphics[width=6.0in]{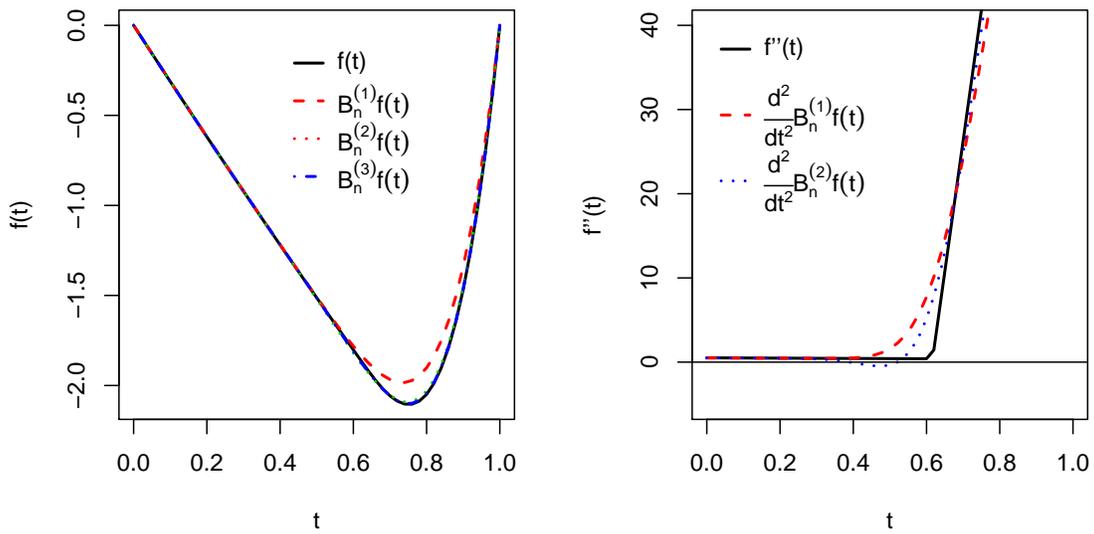}
  \caption{The iterated Bernstein polynomials of $f$ as in Example \ref{example 8} and their derivatives. The function $f$ is strictly convex but $\mathbb{B}_n^{(2)}f$ is not convex.}\label{fig: Example-5-strictly-convex-continuous-positive-second-derivative}
\end{center}
\end{figure}

\end{document}